\documentclass[11pt,a4paper]{amsart}
\usepackage{amssymb}
\usepackage{amsmath}
\usepackage[active]{srcltx}
\usepackage{t1enc}
\usepackage[latin2]{inputenc}
\usepackage{verbatim}
\usepackage{amsmath,amsfonts,amssymb,amsthm}
\usepackage[mathcal]{eucal}
\usepackage{enumerate}
\usepackage[centertags]{amsmath}
\usepackage{graphics}

\usepackage{url}  % ?? ???????: \usepackage{hyperref}

\numberwithin{equation}{section}

\newtheorem{theorem}{Theorem}[section]

\newtheorem{lemma}{Lemma}[section]
\newtheorem{remark}{Remark}[section]
\newtheorem{definition}{Definition}[section]

\newtheorem{proposition}{Proposition}[section]

\newtheorem*{Banach}{\textbf{Theorem A.}}
\newtheorem*{rem}{\textbf{Theorem B.}}

\begin{document}
\author{L.-E. Persson, V. Tsagareishvili, G. Tutberidze and G. Cagareishvili}

\title[Linear functionals]{Some New Sharp Boundedness Result Concerning Linear functionals on Banach spaces}
	
	\address{Lars-Erik Persson, UiT -- The Arctic University of Norway, Narvik, Norway \\ Karlstad University, Department of Mathematics, Karlstad,Sweden \\ Uppsala University, Department of Mathematics, Uppsala, Sweden}
	\email{larserik6pers@gmail.com}
		
	\address {Vakhtang Tsagareishvili, Ivane Javakhishvili Tbilisi State University, Department of Mathematics, Faculty of Exact and Natural Sciences, Chavchavadze str. 1, Tbilisi 0128, Georgia}
	\email{cagare@ymail.com}
	
	\address{Giorgi Tutberidze, The University of Georgia, Viktor Kupradze Institute of Mathematics, 77a Merab Kostava St, Tbilisi 0128, Georgia and Ivane Javakhishvili Tbilisi State University, Faculty of Exact and Natural Sciences, Chavchavadze str. 1, Tbilisi 0128, Georgia}
	\email{g.tutberidze@ug.edu.ge, g.tutberidze@tsu.ge}
	
	\address {Giorgi Cagareishvili, Ivane Javakhishvili Tbilisi State University, Department of Mathematics, Faculty of Exact and Natural Sciences, Chavchavadze str. 1, Tbilisi 0128, Georgia}
	\email{giorgicagareishvili7@gmail.com}

	\thanks{The work was supported by the Shota Rustaveli National Science Foundation grant YS-25-169.}

	\date{}
	\maketitle
	
	\begin{abstract}
		In this paper we state, prove, and discuss some new boundedness results concerning sequences of linear functionals of functions from the Lipschitz class \(Lip_1\) and general orthonormal systems. It is also proved that these results are, in a sense, the best possible.
		
	\end{abstract}
	
	\date{}
	
	\textbf{2020 Mathematics Subject Classification:} 42C10, 46B07.
	
	\keywords{Banach space, linear functionals, general Fourier series, Ces\`aro sums, Lipschitz class functions}

\section{Introduction}\label{sec1}

Concerning notations used in this introduction, we refer to our Section \ref{sec2}. The convergence or summability of Fourier series with respect to classical ONS - such as the trigonometric system (see e.g., \cite[Ch. 4]{kashin1999}), the Haar system (see e.g., \cite[Ch. 3]{kashin1999}), and the Walsh system (see \cite[Ch. 4]{kashin1999}) - for differentiable functions such problems are well studied and described as standard in the literature. This is primarily due to the well-understood structure and good localization properties of these systems, which align well with the smoothness of the functions under consideration.

However, the situation is markedly different when dealing with general ONS. Indeed, as shown by Banach's classical result \cite{banach1940}, the Fourier series with respect to certain ONS may fail to be summable even for simple functions such as \( f(x) = 1 \) on the interval \([0,1]\). This highlights the fact that, in general, summability cannot be taken for granted, even in the case of very regular and even constant functions.

In this paper, we investigate some linear functionals with special focus on the Banach space of functions from the Lipschitz class (\(C_L\)). By studying the boundedness of the sequence of these functionals, we will find conditions under which the Cesaro sums of general Fourier series of functions with derivatives from Lipschitz class functions (\(C_L\)) with respect to general orthonormal systems are bounded. We also note that in general, the Cesaro sums of functions of this class are not bounded. We also prove that the received results are the best possible in a special sense.

In the present work, we state, prove, and discuss some such results for those ONS \((\varphi_n)\) for which the condition
\[
\sigma_n^\alpha(x,f) = O(1)
\]
holds for every function \( f \) whose derivative \( f' \) belongs to the Lipschitz class \(Lip_1\), at a given point \( x \in G \). Similar problems have been investigated in some recent works (see \cite{PTW, person2023, tsagareishvili2021analysis, tsagareishvili2022smooth, tsagareishvili2022problems, tsagareishvili2022lipvar, tsagareishvili2022slovaca, Shalukhina}), where related problems of summability and convergence for various ONS are considered.

This paper is organized as follows: The main results are presented in Section \ref{sec3}. The main boundedness results are stated in Theorems \ref{theorem1} and \ref{theorem3} while the corresponding sharpness results are stated in Theorems \ref{theorem2} and \ref{theorem4}. Theorem \ref{theorem5} is an additional existence result. The proofs are given in Section \ref{sec4}, and Section \ref{sec5} is reserved for some final remarks and results. In order not to disturb the presentations and discussions in this section, some necessary preliminaries are given in Section \ref{sec2}. Some Lemmas in these sections are new and of independent relevance. Finally, in own conclusion section we sum up our new results and point out some complementary new possibilities for research in this fascinating area.

\section{Auxilary definitions and results} \label{sec2}
Let \( C_L \) be the class of functions $f$ such that \( \frac{df}{dx} \in \mathrm{Lip}_1 \).

The space \( \mathrm{Lip}_1 \) is a Banach space with the norm
\[
\|f\|_{\mathrm{Lip}_1} = \|f\|_{C} + \max_{x \ne y} \frac{|f(x) - f(y)|}{|x - y|},
\]
where  \(C[0,1]\) is a space of continuous functions, with the following norm:
\begin{equation*}
	\left\Vert f\right\Vert _{c}=\max_{x\in [0,1]}\left\vert f\left( x\right)
	\right\vert .
\end{equation*}

Suppose that \( (\varphi_n) \) is an orthonormal system (ONS) on \([0,1]\), and let \( f \in L_2 \) be an arbitrary function. The numbers
\[
C_n(f) = \int_0^1 f(x) \varphi_n(x)\,dx, \quad n = 1, 2, \dots
\]
are called the Fourier coefficients of the function \( f \) with respect to the ONS \( (\varphi_n) \).

The Fourier series of the function \( f \) is given by the expression:
\begin{equation}
	\sum_{k=1}^{\infty} C_k(f)\, \varphi_k(x).
	\label{2.1}
\end{equation}

The general partial sum of the series \eqref{2.1}, denoted by \( S_n(x, f) \), $n \in \mathbb{Z}_+,$ is given by the expression
\begin{equation}
	S_n(x,f) = \sum_{k=1}^{n} C_k(f)\, \varphi_k(x).
	\label{2.2}
\end{equation}

The Ces\'aro sums (\cite{alexits1961}, Ch.2) of order \( \alpha \) of the series \eqref{2.1}, denoted by \( \sigma_n^{\alpha}(x, f), \) are given, by the expression
\[
\sigma_n^{\alpha}(x, f) = \frac{1}{A_n^{\alpha}} \sum_{k=0}^{n-1} A_{n-k}^{\alpha}\, a_k\, \varphi_k(x), \]
where  \(\alpha > 0,\) and
\[
A_n^{\alpha} = \frac{(1+\alpha)(2+\alpha)\cdots(n+\alpha)}{n!}.
\]
For \( \alpha > 0 \), the following estimate holds:
\[
C_1 n^{\alpha} < A_n^{\alpha} < C_2 n^{\alpha} \ \ \ (C_1 \ \text{and} \ C_2  \ \text{  are positive constants}).
\]

We define the function \( g_n(x) \) by
\[
g_n(x) = \int_0^x \varphi_n(u)\,du,
\]
and \( Q_n(u, x) \) by
\[
Q_n(u,x) = \frac{1}{A_n^{\alpha}} \sum_{k=0}^{n-1} A_{n-k}^{\alpha} \, g_k(u) \, \varphi_k(x).
\]

We denote
\[
D_{nk}^{\alpha} = \frac{A_{n-k}^{\alpha}}{A_n^{\alpha}},
\]
and then the function \( Q_n(u,x) \) can be represented as
\begin{equation}
	Q_n(u,x) = \sum_{k=0}^{n-1} D_{nk}^{\alpha} g_k(u) \varphi_k(x).
	\label{2.3}
\end{equation}

Moreover, let
\begin{equation*}
	H_n(x) = \frac{1}{n}\sum_{i=1}^{n-1} \left| \int_0^{\frac{i}{n}} Q_n(u,x)\,du \right|.
\end{equation*}

We also define the sequence of linear functionals \(\{B_n(f,x)\}\) on the Banach space \(Lip_1 \) as follows:
\begin{equation*}
	B_n(f,x) := \int_{0}^{1} f(u) Q_n(u,x)du.
\end{equation*}    

\begin{lemma}\label{lemma1}
	\textit{Let \( (\varphi_k) \) be an arbitrary ONS. Then}
	\begin{equation}
		\frac{1}{n^2} \sum_{k=1}^{n} \varphi_k^2(x) = O(1)n^{-\frac{1}{2}}, \quad \text{a.e. on } [0,1].
		\label{2.5}
	\end{equation}	
\end{lemma}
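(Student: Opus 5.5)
We need to show $\sum_{k=1}^n \varphi_k^2(x) = O(n^{3/2})$ almost everywhere. The only information available about an arbitrary ONS is $\int_0^1\varphi_k^2=1$. So the plan is a Borel--Cantelli argument along a sparse subsequence, combined with the monotonicity of the partial sums $\Phi_n(x):=\sum_{k=1}^n\varphi_k^2(x)$ in $n$.

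\emph{Step 1 (integral bound).} By orthonormality, $\int_0^1 \Phi_n(x)\,dx = n$. Hence, by Chebyshev's inequality, for any $\lambda>0$,
\[
\bigl|\{x\in[0,1]:\ \Phi_n(x) > \lambda\}\bigr| \le \frac{n}{\lambda}.
\]

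\emph{Step 2 (dyadic subsequence and Borel--Cantelli).} Take $n=2^m$ and $\lambda = 2^{3m/2}$. The set $E_m=\{x:\Phi_{2^m}(x)>2^{3m/2}\}$ then has measure at most $2^{-m/2}$, and these measures are summable over $m$. By the Borel--Cantelli lemma, almost every $x$ lies in only finitely many $E_m$. Thus for a.e. $x$ there is an $m_0(x)$ such that $\Phi_{2^m}(x)\le 2^{3m/2}$ for all $m\ge m_0(x)$.

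\emph{Step 3 (filling the gaps).} For $2^{m-1}< n\le 2^m$, the terms $\varphi_k^2$ are nonnegative, so $\Phi_n(x)\le \Phi_{2^m}(x)\le 2^{3m/2}\le 2^{3/2} n^{3/2}$. This gives $\frac1{n^2}\Phi_n(x)\le 2^{3/2} n^{-1/2}$ for all $n\ge 2^{m_0(x)}$. Since the finitely many smaller $n$ are handled by a finite constant, depending on $x$, wherever the $\varphi_k(x)$ are finite (which is a.e.), we get \eqref{2.5} with an $x$-dependent $O(1)$.

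The main, though minor, obstacle is that Chebyshev's inequality applied to each $n$ separately gives measures $n^{-1/2}$, which are not summable. Passing to the dyadic subsequence and using monotonicity of $\Phi_n$ fixes this. In fact the argument has room to spare: it yields $\Phi_n=O(n^{1+\varepsilon})$ a.e. for any $\varepsilon>0$, so the exponent $-\tfrac12$ in the statement is far from tight.
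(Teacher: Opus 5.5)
Your proof is correct, but it takes a different route from the paper. The paper needs no subsequences. Since $k\le n$ implies $n^{-3/2}\le k^{-3/2}$, it writes
\[
\frac{1}{n^2}\sum_{k=1}^{n}\varphi_k^2(x)\le \frac{1}{\sqrt n}\sum_{k=1}^{n}k^{-3/2}\varphi_k^2(x)\le \frac{1}{\sqrt n}\,S(x),\qquad S(x):=\sum_{k=1}^{\infty}k^{-3/2}\varphi_k^2(x).
\]
It then applies Levi's (monotone convergence) theorem to get $\int_0^1 S=\sum_k k^{-3/2}<\infty$, so $S$ is finite a.e.

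The difference is where the summable weights go. The paper attaches them to the individual terms $\varphi_k^2$ rather than to the partial sums $\Phi_n$. This sidesteps exactly the non-summability of the Chebyshev bounds $n^{-1/2}$ that you identified as the obstacle. It also gives a single majorant valid for every $n$ at once, and the $x$-dependent constant is the integrable function $S$. So small $n$ need no separate treatment, and Chebyshev applied to $S$ gives a uniform bound outside sets of small measure.

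Your Chebyshev--Borel--Cantelli argument along $n=2^m$, combined with monotonicity of $\Phi_n$, reaches the same conclusion. Its constant is less explicit: it depends on $m_0(x)$ and on finitely many values $\varphi_k(x)$, hence your extra remark about small $n$. In exchange, it is a robust blocking technique that needs only first-moment control of a monotone sequence.

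Both approaches give your sharper $O(n^{-1+\varepsilon})$ bound equally easily. In the paper's version one simply replaces $k^{-3/2}$ by $k^{-1-\varepsilon}$.
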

\begin{proof}
	Since \[
	\frac{1}{n^2} \sum_{k=1}^{n} \varphi_k^2(x) \le \frac{1}{\sqrt{n}} \sum_{k=1}^{n} k^{-\frac{3}{2}} \varphi_k^2(x),
	\]
	we can conclude that
	\[
	\frac{1}{\sqrt{n}} \sum_{k=1}^{\infty} k^{-\frac{3}{2}} \int_0^1 \varphi_k^2(x)\,dx = \frac{1}{\sqrt{n}} \sum_{k=1}^{\infty} k^{-\frac{3}{2}} < +\infty,
	\]
	and it follows from Levi's theorem that the convergence holds almost everywhere, which proves that \eqref{2.5} holds.
\end{proof}

Let  \( G = [0,1] \setminus F \), where
\[
F = \left\{ x \in [0,1] \,\middle|\, \lim_{n \to \infty} \frac{1}{n^2} \sum_{k=1}^{n} \varphi_k^2(x) = \infty \right\}.
\]
It is easy to show that
\[
|F| = 0 \quad \text{and} \quad |G| = 1.
\]

\medskip

\begin{definition}
	We say that \( f \in E(x) \) if, at the point \( x \in [0,1] \),
	\[
	\limsup_{n \to \infty} \left| B_n(f, x) \right| < +\infty \ \ \ \ \ \ 
	\text{or} \ \ \ \ \ \ B_n(f, x)=O(1).\]
\end{definition}

\begin{lemma} 
	\label{Lemma2} 
	For every integer \( i \) with \( 1 \leq i \leq n \) and for all \( x \in [0,1] \), the following inequality holds (see \eqref{2.3}):
	\begin{equation}
		\int_{\frac{i-1}{n}}^{\frac{i}{n}} |Q_n(u, x)|\,du \leq \left( \frac{1}{n^2} \sum_{k=1}^n \varphi_k^2(x) \right)^{\frac{1}{2}}.
		\label{2.6}
	\end{equation}
\end{lemma}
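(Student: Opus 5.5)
Fix $x$ and $i$, and write $I_i=[\frac{i-1}{n},\frac{i}{n}]$, which has length $1/n$. The plan is to apply Cauchy--Schwarz twice: first in $u$ over $I_i$, then in the summation index $k$. This reduces \eqref{2.6} to a Bessel-type bound for the functions $g_k$ restricted to $I_i$.

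\emph{Step 1 (Cauchy--Schwarz in $u$).} We have
\[
\int_{I_i}|Q_n(u,x)|\,du\le n^{-1/2}\Bigl(\int_{I_i}Q_n^2(u,x)\,du\Bigr)^{1/2}.
\]

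\emph{Step 2 (Cauchy--Schwarz in $k$).} Use $0<D^\alpha_{nk}\le 1$; for $\alpha>0$ the sequence $A^\alpha_m$ is increasing in $m$, so $A^\alpha_{n-k}\le A^\alpha_n$. Then
\[
Q_n^2(u,x)\le\Bigl(\sum_{k}\varphi_k^2(x)\Bigr)\Bigl(\sum_k g_k^2(u)\Bigr).
\]
Here the sums run over $k\le n$; the index $k=0$ is either absent or treated as $\varphi_0$ with the obvious convention, and I would simply regard the range as $1\le k\le n$, matching the right-hand side of \eqref{2.6}.

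\emph{Step 3 (bound on $\sum_k g_k^2$).} The key pointwise fact is that $g_k(u)=\int_0^1\chi_{[0,u]}\varphi_k$ is the $k$-th Fourier coefficient of the indicator $\chi_{[0,u]}$. Bessel's inequality then gives
\[
\sum_k g_k^2(u)\le\|\chi_{[0,u]}\|_2^2=u\le 1.
\]
Combining the three steps,
\[
\int_{I_i}|Q_n|\,du\le n^{-1/2}\Bigl(\frac1n\sum_{k\le n}\varphi_k^2(x)\Bigr)^{1/2}=\Bigl(\frac1{n^2}\sum_{k\le n}\varphi_k^2(x)\Bigr)^{1/2},
\]
which is exactly \eqref{2.6}.

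The main point to check is Step 2's claim $D^\alpha_{nk}\le1$, which follows from the monotonicity of $A^\alpha_m$ in $m$ for $\alpha>0$, since each factor $(m+\alpha)/m$ exceeds $1$. The Bessel step is the conceptual heart of the argument and needs no further work. If one wanted the sharper factor, one could integrate $\sum_k g_k^2\le u$ over $I_i$, but the crude bound $u\le1$ already suffices.
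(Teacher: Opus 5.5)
Your proof is correct and follows essentially the same route as the paper's: Cauchy--Schwarz in $u$ over an interval of length $1/n$, then Cauchy--Schwarz in $k$ using $D_{nk}^{\alpha}\le 1$, then Bessel's inequality for $\chi_{[0,u]}$, which gives $\sum_k g_k^2(u)\le u\le 1$ (the paper's \eqref{2.7}). The index mismatch you flag, $0\le k\le n-1$ in \eqref{2.3} versus $1\le k\le n$ in \eqref{2.6}, is passed over in the same silent way in the paper's own proof.
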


\begin{proof}
	From the Bessel inequality, it follows that
	
	\begin{equation}
		\sum_{k=1}^\infty g_k^2(u) = \sum_{k=1}^\infty \left( \int_0^u \varphi_k(t)\,dt \right)^2 \le \int_0^u dt \le 1.
		\label{2.7}
	\end{equation}
	Here, the function \( g_k \) is defined by
	\[
	g_k(u) = \int_0^u \varphi_k(t)\, dt, \quad u \in [0,1].
	\]
	
	By applying the Cauchy and H\"older inequalities, as well as inequality \eqref{2.7}, and taking into account that \( D_{nk}^{\alpha} < 1 \), we can conclude that, for each \( i = 1, \dots, n \),
	
	\begin{align*}
		\int_{\frac{i-1}{n}}^{\frac{i}{n}} |Q_n(u,x)|\,du
		&\le \frac{1}{\sqrt{n}} \left( \int_{\frac{i-1}{n}}^{\frac{i}{n}} Q_n^2(u,x)\,du \right)^{\frac{1}{2}} \\
		& = \frac{1}{\sqrt{n}} \left( \int_{\frac{i-1}{n}}^{\frac{i}{n}} \left( \sum_{k=1}^n D_{nk}^{\alpha} g_k(u) \varphi_k(x) \right)^2 du \right)^{\frac{1}{2}}\\
		&\le \frac{1}{\sqrt{n}} \left( \int_{\frac{i-1}{n}}^{\frac{i}{n}}  \sum_{k=1}^n (D_{nk}^{\alpha})^2 g_k^2(u)\, du \cdot \sum_{k=1}^n \varphi_k^2(x) \right)^{\frac{1}{2}} \\
		&\le \frac{1}{n} \left( \sum_{k=1}^n \varphi_k^2(x) \right)^{\frac{1}{2}}.
	\end{align*}
	The proof is complete.
	
\end{proof}

\begin{lemma}
	\label{lemma3}
	Let   \( (\varphi_n) \) be an ONS on \([0,1]\), and let \( f \in C_L \). Then
	\begin{equation}
		\sigma_n^{\alpha}(x, f) = f(1)\, \sigma_n^{\alpha}(x, q) - \int_0^1 f'(u)\, Q_n(u,x)\,du,
		\label{2.8}
	\end{equation}
	where \( q(u) = 1 \) for all \( u \in [0,1] \).
	
\end{lemma}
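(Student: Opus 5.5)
The plan is to integrate by parts in each Fourier coefficient and then assemble the Ces\`aro sum. Interpret $a_k$ in the definition of $\sigma_n^\alpha$ as $C_k(f)$.

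Fix $k$. Since $f\in C_L$, the derivative $f'$ is Lipschitz, so $f$ is continuously differentiable, hence absolutely continuous. By construction $g_k$ is absolutely continuous with $g_k'=\varphi_k$ a.e. and $g_k(0)=0$. Integration by parts then gives
\[
C_k(f)=\int_0^1 f(u)\varphi_k(u)\,du = f(1)g_k(1)-f(0)g_k(0)-\int_0^1 f'(u)g_k(u)\,du
= f(1)g_k(1)-\int_0^1 f'(u)g_k(u)\,du .
\]
Applying the same identity to the constant function $q\equiv 1$, whose derivative vanishes, we get $C_k(q)=g_k(1)$. Hence
\[
C_k(f)=f(1)\,C_k(q)-\int_0^1 f'(u)g_k(u)\,du .
\]

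Next multiply by $D_{nk}^{\alpha}\varphi_k(x)$ and sum over the finitely many indices $k$ appearing in $\sigma_n^\alpha$. The first term sums to $f(1)\sigma_n^\alpha(x,q)$, by the definition of $\sigma_n^\alpha$ applied to $q$. In the second term the sum is finite, so it can be interchanged with the integral. This gives
\[
\sum_k D_{nk}^\alpha \varphi_k(x)\int_0^1 f'(u)g_k(u)\,du=\int_0^1 f'(u)\sum_k D^\alpha_{nk}g_k(u)\varphi_k(x)\,du=\int_0^1 f'(u)Q_n(u,x)\,du
\]
by \eqref{2.3}, and \eqref{2.8} follows.

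There is no real obstacle. The one point needing care is justifying the integration by parts when $\varphi_k$ is only in $L_2$. This is handled by noting that $g_k$ is absolutely continuous, with $g_k'=\varphi_k$ a.e. by the Lebesgue differentiation theorem, and that $f$ is $C^1$. Also, the index ranges in the definitions of $\sigma_n^\alpha$ and $Q_n$ are the same, so the sums match exactly.
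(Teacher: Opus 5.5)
Your proposal is correct and matches the paper's proof: you integrate by parts in each coefficient to get $C_k(f)=f(1)C_k(q)-\int_0^1 f'(u)g_k(u)\,du$, then multiply by $D_{nk}^\alpha\varphi_k(x)$, sum the finitely many terms, and swap sum and integral to recognize $Q_n(u,x)$ from \eqref{2.3}. Your justification of the integration by parts — $g_k$ is absolutely continuous with $g_k(0)=0$ and $f$ is $C^1$ — is a detail the paper leaves implicit.
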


\begin{proof}
	By integrating by parts, we have that
	\[
	C_k(f) = \int_0^1 f(u)\, \varphi_k(u)\,du
	= f(1) \int_0^1 \varphi_k(u)\,du - \int_0^1 f'(u)\, g_k(u)\,du,
	\]
	that is,
	\begin{equation}
		C_k(f) = f(1) C_k(q) - \int_0^1 f'(u)\, g_k(u)\,du.
		\label{2.9}
	\end{equation}
	Therefore, by \eqref{2.9}
	\begin{align*}
	\sum_{k=0}^{n-1} D_{nk}^{\alpha} C_k(f)\, \varphi_k(x)
& = f(1) \sum_{k=0}^{n-1} D_{nk}^{\alpha} C_k(q)\, \varphi_k(x)
	\\
	& \qquad - \int_0^1 f'(u) \sum_{k=0}^{n-1} D_{nk}^{\alpha} g_k(u) \varphi_k(x)\, du.
	\end{align*}
	By also taking equation \eqref{2.3} into consideration, we obtain the desired result \eqref{2.8}. The proof is complete.
	
\end{proof}

We also need the following well-known results:
\begin{Banach}(see \cite{banach1940})  \label{Banach}
	\textit{ \ For any \( f \in L_2 \),  \( \left(f \not\sim 0 \right) \), there exists an ONS \( (\varphi_n) \) such that}
	\[
	\limsup_{n \to \infty} \left| \sigma_n^{\alpha}(x, f) \right| = +\infty, \quad \text{a.e. on } [0,1].
	\]
\end{Banach}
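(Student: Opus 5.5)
We normalize so that $\|f\|_2=1$; this only rescales $\sigma_n^{\alpha}(x,f)$ by $\|f\|_2$. The plan is to start from a known "bad" orthonormal system. We then rotate it so that its Fourier coefficients against $f$ are prescribed. The rotation will be a rank-one perturbation, so it only adds terms that stay bounded a.e.

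\emph{Step 1 (input).} Take a Menshov-type orthonormal system $(\omega_k)$ on $[0,1]$ and coefficients $(a_k)$ with $\sum a_k^2=1$ such that the Ces\`aro means $\sum_{k<n} D_{nk}^{\alpha} a_k\omega_k(x)$ are unbounded a.e. Such systems exist by the sharpness of the Menshov--Kaczmarz theorem on Ces\`aro summability: for every weight $w(k)=o((\log\log k)^2)$ there are $\ell_2$ series that are not summable a.e. We take this classical fact as given.

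\emph{Step 2 (make the system orthogonal to $f$).} Let $c_k=\langle \omega_k,f\rangle$, so $\sum c_k^2\le 1$. If $\sum c_k^2=1$, we discard one $\omega_k$ with $c_k\neq0$; this changes only one term of the series. Put $\omega_k'=\omega_k-c_kf$. The Gram matrix of $(\omega_k')$ is $I-cc^{T}$, whose inverse square root has the form $I+\gamma cc^{T}$. Hence the orthonormalized functions are
\[
\psi_k=\omega_k'+\gamma c_k u, \qquad u=\sum_j c_j\omega_j' .
\]
These $\psi_k$ are orthonormal and lie in $f^{\perp}$. Their Ces\`aro means against $(a_k)$ differ from those of $(\omega_k)$ by
\[
\Bigl(\sum_k D_{nk}^{\alpha}a_kc_k\Bigr)\bigl(\gamma u(x)-f(x)\bigr).
\]
The scalar factor is bounded by Cauchy--Schwarz, since $D_{nk}^{\alpha}\le1$. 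So this difference is bounded a.e., and the means for $(\psi_k)$ are still unbounded a.e.

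\emph{Step 3 (prescribe the coefficients of $f$).} Let $h=\sum a_k\psi_k\in f^{\perp}$ and define
\[
\varphi_k=\psi_k+a_k(f-h).
\]
A direct check gives $\langle\varphi_k,\varphi_j\rangle=\delta_{kj}+a_ka_j(\|f\|^2+\|h\|^2)-2a_ka_j=\delta_{kj}$, using $\|h\|=1$. Moreover $C_k(f)=\langle f,\varphi_k\rangle=a_k$. Therefore
\[
\sigma_n^{\alpha}(x,f)=\sum_k D_{nk}^{\alpha}a_k\psi_k(x)+\Bigl(\sum_k D_{nk}^{\alpha}a_k^2\Bigr)\bigl(f(x)-h(x)\bigr).
\]
The second term is bounded a.e., and the first is unbounded a.e. by Step 2. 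This gives the claimed $\limsup_{n}|\sigma_n^{\alpha}(x,f)|=+\infty$ a.e.

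The main obstacle is Step 1, namely producing an $\ell_2$ orthogonal series whose Ces\`aro means of order $\alpha$ are unbounded almost everywhere. Steps 2 and 3 are finite-rank linear algebra, and everything there is controlled by $D_{nk}^{\alpha}\le1$ and $\sum a_k^2<\infty$. For Step 1 we would cite Menshov's construction, in the form given by Alexits, and not reprove it.
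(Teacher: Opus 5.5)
The paper gives no proof of Theorem A. It quotes the result from Banach's 1940 paper and uses it only as a black box, e.g.\ in the proof of Theorem \ref{theorem5}. Your argument is therefore a genuinely independent route: you reduce Banach's theorem to the existence of a single $\ell_2$ orthonormal series with a.e.\ unbounded Ces\`aro means, using two explicit finite-rank unitary corrections.

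Your computations check out. Step 2 is the symmetric (L\"owdin) orthonormalization of $\omega_k-c_kf$. Step 3 is just the reflection $Rv=v-\langle v,h-f\rangle(h-f)$; note $\|h-f\|_2^2=2$, so $R$ swaps $h$ and $f$. This makes orthonormality of $\varphi_k=R\psi_k$ and the identity $\langle f,\varphi_k\rangle=\langle h,\psi_k\rangle=a_k$ immediate. Since $0<D_{nk}^{\alpha}\le 1$ and $D_{nk}^{\alpha}\to 1$ for each fixed $k$, the scalars $\sum_k D_{nk}^{\alpha}a_kc_k$ and $\sum_k D_{nk}^{\alpha}a_k^2$ actually converge. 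So your construction transfers the whole a.e.\ behaviour of the model series to the Fourier series of $f$. The paper's citation buys only brevity; your route makes clear that all the depth sits in the Menshov-type example.

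Two points need tightening. First, Step 1 uses \emph{unboundedness} of the means, but you justify it by the sharpness of the Menshov--Kaczmarz theorem, which is a non-summability statement. Because your perturbations converge, a merely non-summable model would only give non-summability of $\sigma_n^{\alpha}(x,f)$, not $\limsup_n|\sigma_n^{\alpha}(x,f)|=+\infty$. So cite the form you actually use: an $\ell_2$ orthonormal series whose $(C,\alpha)$ means are unbounded a.e., which is also classical and goes back to Menshov.

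Second, discarding $\omega_{k_0}$ causes two problems. It leaves a hole at index $k_0$, and reindexing instead would shift the weights $D_{nk}^{\alpha}$. It also breaks $\sum a_k^2=1$, which you need for $\|h\|=1$. To fix this, renormalize $(a_k)_{k\neq k_0}$, which does not affect unboundedness, and put $\varphi_{k_0}:=h=Rf$. Since $\langle h,\varphi_k\rangle=\langle Rf,R\psi_k\rangle=\langle f,\psi_k\rangle=0$ and $\langle f,h\rangle=0$, the completed sequence is orthonormal. The added term contributes nothing to $\sigma_n^{\alpha}(x,f)$. With these two fixes the proof is complete.
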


\begin{rem} (see \cite{gogoladze2012})  \label{BBBB}
	\textit{ \ If \( F \in L_2 \) and \(f\) is a finite function in any point on \([0,1]\),  then the following identity holds:}
	\begin{align}
		\label{2.10}\int_0^1 f(u)F(u)\,du & = \sum_{i=1}^{n-1} \left(f\left(\frac{i}{n}\right) -f\left(\frac{i+1}{n}\right)\right) \int_{0}^{\frac{i}{n}} F(u)du \\
		&+\sum_{i=1}^{n-1} \int_{\frac{i-1}{n}}^{\frac{i}{n}} \left(f(u) - f\left(\frac{i}{n}\right)\right) F(u) \, du
		+ f(1) \int_0^1 F(u) \, du.
		\notag
	\end{align}
\end{rem}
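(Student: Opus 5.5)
The identity is an elementary Abel summation applied to the partition of $[0,1]$ into the intervals $I_i=\left[\frac{i-1}{n},\frac{i}{n}\right]$, $i=1,\dots,n$. Throughout I assume that $fF$ is integrable. This holds, for instance, when $f$ is bounded and measurable, which covers the paper's applications with $f\in Lip_1$ or $f=f'\in Lip_1$. Under that assumption all integrals in \eqref{2.10} exist. The plan has three steps.

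\textbf{Step 1 (split the integrand).} Write
\[
\int_0^1 f(u)F(u)\,du=\sum_{i=1}^{n}\int_{I_i} f(u)F(u)\,du .
\]
On each $I_i$, decompose $f(u)=\left(f(u)-f\left(\frac{i}{n}\right)\right)+f\left(\frac{i}{n}\right)$. This gives
\[
\int_0^1 fF=\sum_{i=1}^{n}\int_{I_i}\left(f(u)-f\left(\tfrac{i}{n}\right)\right)F(u)\,du+\sum_{i=1}^{n} f\left(\tfrac{i}{n}\right)\int_{I_i}F(u)\,du .
\]

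\textbf{Step 2 (Abel summation on the second sum).} Put $G_i=\int_0^{i/n}F(u)\,du$ for $i=0,1,\dots,n$, so that $G_0=0$, $G_n=\int_0^1F$, and $\int_{I_i}F=G_i-G_{i-1}$. Summation by parts then gives
\[
\sum_{i=1}^{n} f\left(\tfrac{i}{n}\right)(G_i-G_{i-1})
=\sum_{i=1}^{n-1}\left(f\left(\tfrac{i}{n}\right)-f\left(\tfrac{i+1}{n}\right)\right)G_i+f(1)\,G_n .
\]
This reproduces exactly the first and last terms on the right of \eqref{2.10}. It is checked by expanding the left side and regrouping the coefficient of each $G_i$, using $G_0=0$.

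\textbf{Step 3 (combine and check the index range).} Substituting Step 2 into Step 1 gives \eqref{2.10}, with one caveat. The middle sum comes out running over $i=1,\dots,n$, that is, it also contains the term
\[
\int_{\frac{n-1}{n}}^{1}\left(f(u)-f(1)\right)F(u)\,du .
\]
So the only real point of care is this index bookkeeping. As printed, with the middle sum ending at $n-1$, the identity holds only if that last term vanishes. I would therefore state and prove the identity with the middle sum extended to $i=n$, and remark that this is how it should be read.

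This correction is harmless for the later estimates. The extra term is of the same type as the others and is bounded in the same way: by $\frac{1}{n}\,\|f\|_{Lip_1}\int_{I_n}|F|$, which Lemma \ref{Lemma2} controls when $F=Q_n(\cdot,x)$. No other obstacle is expected, since the rest is purely algebraic.
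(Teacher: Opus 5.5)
Your argument is correct, given the integrability of $fF$ that the statement tacitly requires. It is the standard Abel-summation proof of this lemma from the cited Gogoladze--Tsagareishvili paper; the present paper gives no proof of its own and only quotes the result. You are also right that the middle sum must run over $i=1,\dots,n$: as printed the identity fails, e.g.\ for $f(u)=u$, $F\equiv 1$, $n=2$ the right side is $\tfrac58$ instead of $\tfrac12$, the missing term being $\int_{1/2}^{1}(u-1)\,du=-\tfrac18$. The correction does not affect the later bounds on $I_2$ and $A_2$ in the proofs of Theorems~\ref{theorem1} and~\ref{theorem2}, since both go through $\frac1n\int_0^1|Q_n(u,x)|\,du$, which already covers the last interval.
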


\section{Statement of the Main Results}\label{sec3}

Our first main result reads:
\begin{theorem}\label{theorem1}
	Let \( B_n(f,x) \) be an arbitrary sequence of linear functionals on \( Lip_1 \), and let \( q \in E( x) \), where  \( q(u) = 1 \) for all \( u \in [0,1] \).
	If, for a point \( x \in G \), the condition  
	\begin{equation}
		H_n(x) = O(1)
		\label{eq:3.1}
	\end{equation}
	holds, then for any function \( f \in Lip_1 \), the following estimate is valid at the point \( x \in G \):
	\[ B_n(f,x) = O(1).	\]
\end{theorem}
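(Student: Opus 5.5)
We apply the identity \eqref{2.10} of Theorem B with the $L_2$ function $F(u)=Q_n(u,x)$, with $x\in G$ fixed, and with $f\in Lip_1$. This writes $B_n(f,x)=\int_0^1 f(u)Q_n(u,x)\,du$ as a sum of three terms:
\begin{align*}
B_n(f,x) &= \sum_{i=1}^{n-1}\Bigl(f\bigl(\tfrac{i}{n}\bigr)-f\bigl(\tfrac{i+1}{n}\bigr)\Bigr)\int_0^{\frac{i}{n}}Q_n(u,x)\,du \\
&\quad+\sum_{i=1}^{n-1}\int_{\frac{i-1}{n}}^{\frac{i}{n}}\Bigl(f(u)-f\bigl(\tfrac{i}{n}\bigr)\Bigr)Q_n(u,x)\,du + f(1)\int_0^1 Q_n(u,x)\,du =: I_1+I_2+I_3 .
\end{align*}
We estimate each term separately, using only $\|f\|_{Lip_1}$, the hypothesis \eqref{eq:3.1}, the hypothesis $q\in E(x)$, and Lemma \ref{Lemma2}.

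\emph{Estimate of $I_1$.} Since $f\in Lip_1$, we have $|f(\frac{i}{n})-f(\frac{i+1}{n})|\le \|f\|_{Lip_1}\,\frac1n$. Therefore
\[
|I_1|\le \|f\|_{Lip_1}\,\frac1n\sum_{i=1}^{n-1}\Bigl|\int_0^{\frac in}Q_n(u,x)\,du\Bigr| = \|f\|_{Lip_1}H_n(x),
\]
which is $O(1)$ by \eqref{eq:3.1}.

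\emph{Estimate of $I_3$.} By definition, $\int_0^1 Q_n(u,x)\,du=\int_0^1 q(u)Q_n(u,x)\,du=B_n(q,x)$. Hence $|I_3|\le \|f\|_C\,|B_n(q,x)|$, and this is $O(1)$ because $q\in E(x)$.

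\emph{Estimate of $I_2$.} For $u\in[\frac{i-1}{n},\frac in]$ we have $|f(u)-f(\frac in)|\le \|f\|_{Lip_1}/n$. Lemma \ref{Lemma2} then bounds each interval integral:
\[
|I_2|\le \frac{\|f\|_{Lip_1}}{n}\sum_{i=1}^{n-1}\int_{\frac{i-1}{n}}^{\frac in}|Q_n(u,x)|\,du \le \|f\|_{Lip_1}\Bigl(\frac1{n^2}\sum_{k=1}^n\varphi_k^2(x)\Bigr)^{1/2}.
\]

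This last step is the one point that needs care. For $x\in G$, the quantity $\frac1{n^2}\sum_{k\le n}\varphi_k^2(x)$ must be bounded. Lemma \ref{lemma1} gives this for a.e.\ $x$; indeed it shows the quantity tends to $0$. So we read the definition of $G$ as the set of points where this quantity stays bounded, which is consistent with Lemma \ref{lemma1}, and then $I_2=O(1)$. A cruder alternative is to bound $\int_0^1|Q_n(u,x)|\,du$ directly by Cauchy--Schwarz together with \eqref{2.7}, but this loses a factor $n$, so the interval-wise bound of Lemma \ref{Lemma2} is essential.

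Summing the three estimates gives $B_n(f,x)=O(1)$ at every $x\in G$ satisfying \eqref{eq:3.1}.
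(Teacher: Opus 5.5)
Your proof is correct and is essentially the paper's own argument. You use the same decomposition from Theorem B into $I_1+I_2+I_3$, with $I_1$ controlled by $H_n(x)$, $I_3$ by $B_n(q,x)$, and $I_2$ by $\bigl(\frac{1}{n^2}\sum_{k\le n}\varphi_k^2(x)\bigr)^{1/2}$. Your reading of $G$ as the set where this last quantity stays bounded is exactly what the paper's proof tacitly needs; the literal definition of $F$ via $\lim=+\infty$ would not guarantee it.

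One correction to your side remark. The global estimate $\frac1n\int_0^1|Q_n(u,x)|\,du\le\frac1n\bigl(\sum_{k\le n}\varphi_k^2(x)\bigr)^{1/2}$, obtained from Cauchy--Schwarz and \eqref{2.7}, gives the same bound as summing Lemma \ref{Lemma2} over the subintervals and loses no factor $n$. It is in fact the route the paper takes, so Lemma \ref{Lemma2} is convenient but not essential.
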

The sharpness of Theorem \ref{theorem1} is shown in our next theorem.

\begin{theorem} \label{theorem2}
	Let \( (B_n(f,x)) \)) be an arbitrary sequence of linear functionals on \(Lip_1\). If for some point \( x_0 \in G \) the following holds:
	\[
	\limsup_{n \to \infty} |H_n(x_0)| = +\infty,
	\]
	then there exists a function \( r \) such that \( r \in Lip_1 \) and
	\begin{equation} \label{eq:3.3}
		\limsup_{n \to \infty} \left| B_n(r, x_0) \right| = +\infty.
	\end{equation}
\end{theorem}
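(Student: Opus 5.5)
Throughout, $B_n(f,x_0)=\int_0^1 f(u)Q_n(u,x_0)\,du$ is a bounded linear functional on $Lip_1$ for each fixed $n$. The plan is to use the Banach--Steinhaus (uniform boundedness) principle: it suffices to show that the norms $\|B_n(\cdot,x_0)\|_{Lip_1^*}$ are unbounded. Then some $r\in Lip_1$ satisfies $\limsup_n|B_n(r,x_0)|=+\infty$, which is \eqref{eq:3.3}. If one wants an explicit $r$, the usual condensation-of-singularities series $r=\sum_j c_j f_{n_j}$ with rapidly decreasing $c_j$ can replace the abstract principle, but invoking Banach--Steinhaus is cleaner.

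To bound the norms from below, we construct test functions $f_n\in Lip_1$ with $\|f_n\|_{Lip_1}\le C$ and $|B_n(f_n,x_0)|\ge c\,H_n(x_0)-C'$, where $C'$ is a constant independent of $n$. The construction uses the identity \eqref{2.10} of Theorem B with $F(u)=Q_n(u,x_0)$, which lies in $L_2$ since it is a finite combination of continuous functions. Define $f_n$ as the piecewise linear function with $f_n(1)=0$ and
\[
f_n\Bigl(\frac{i}{n}\Bigr)-f_n\Bigl(\frac{i+1}{n}\Bigr)=\frac1n\,\mathrm{sign}\int_0^{i/n}Q_n(u,x_0)\,du,\qquad i=1,\dots,n-1,
\]
and take $f_n$ linear on each interval $[\frac{i-1}{n},\frac in]$ (constant on the first). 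Then $f_n$ has slope at most $1$ in absolute value, and $|f_n|\le1$, so $\|f_n\|_{Lip_1}\le 2$. Substituting into \eqref{2.10}, the first sum equals exactly $H_n(x_0)$. The last term vanishes because $f_n(1)=0$.

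The middle term is the main obstacle. On $[\frac{i-1}{n},\frac in]$ we have $|f_n(u)-f_n(\frac in)|\le \frac1n$, so by Lemma \ref{Lemma2} the whole middle sum is at most
\[
\frac1n\sum_{i=1}^{n}\int_{\frac{i-1}{n}}^{\frac in}|Q_n(u,x_0)|\,du\le \Bigl(\frac1{n^2}\sum_{k=1}^n\varphi_k^2(x_0)\Bigr)^{1/2}.
\]
This bound is finite along the relevant subsequence because $x_0\in G$. Here lies the delicacy: the definition of $G$ only excludes $\lim =\infty$, so I would use the a.e. bound of Lemma \ref{lemma1}, which gives $O(n^{-1/4})$ on $G$, or at least boundedness along a subsequence. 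I would then pick a subsequence $n_j$ on which $H_{n_j}(x_0)\to\infty$ and $\frac1{n_j^2}\sum_{k\le n_j}\varphi_k^2(x_0)$ stays bounded. If necessary, I would interpret $G$ as the set where Lemma \ref{lemma1} holds, as the paper implicitly does.

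Along that subsequence, $|B_{n_j}(f_{n_j},x_0)|\ge H_{n_j}(x_0)-C\to\infty$ while $\|f_{n_j}\|_{Lip_1}\le 2$. Hence $\|B_{n_j}(\cdot,x_0)\|\to\infty$, and the Banach--Steinhaus theorem on the Banach space $Lip_1$ yields the required $r\in Lip_1$.
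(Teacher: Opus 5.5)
Your proof is correct, but it gets the lower bound for $\|B_n(\cdot,x_0)\|$ from a different test function than the paper uses.

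The paper takes $r_n(u)=\int_0^u\operatorname{sign}\bigl(\int_0^yQ_n(v,x_0)\,dv\bigr)\,dy$, which follows the true sign of $\int_0^yQ_n$. This costs it two extra steps:
\begin{itemize}
\item Since $r_n(1)\neq0$, it must first split off the case where $B_n(q,x_0)$ is unbounded (then $r=q$ works). Only then can it control the boundary term $r_n(1)\int_0^1Q_n$.
\item Since that sign may change inside a cell $[\frac in,\frac{i+1}n)$, it needs the $E_n/F_n$ bookkeeping. The bad indices are bounded via the zeros $t_{in}$ by $\frac1n\int_0^1|Q_n|$.
\end{itemize}

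Your $f_n$ freezes the sign on each cell and is anchored at $f_n(1)=0$. So the first sum in \eqref{2.10} equals $H_n(x_0)$ exactly and the last term vanishes. The case distinction and the sign-change analysis both disappear. The only error term is the paper's $A_2$, which you estimate the same way via Lemma~\ref{Lemma2}. Your sum over all $i\le n$ also covers the last cell $[\frac{n-1}n,1]$, which the printed form of \eqref{2.10} omits.

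The paper's $r_n$ is closer to the extremal function for the dual norm, roughly $\int_0^1\bigl|\int_0^yQ_n\bigr|\,dy$. For comparison with the Riemann sum $H_n$, however, your discretized version is the more efficient choice. Both proofs then conclude with Banach--Steinhaus.

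On $G$: you are right that its definition only excludes $\lim=\infty$. Both proofs actually need $\frac1{n^2}\sum_{k\le n}\varphi_k^2(x_0)=O(1)$; the paper uses this silently in \eqref{al1} and \eqref{eq:3.10}. Your first fallback is not justified as stated: you propose a subsequence along which $H_{n_j}(x_0)\to\infty$ and this quantity stays bounded, but those two subsequences need not intersect. The correct repair is your other suggestion: read $G$ as the full-measure set where the bound of Lemma~\ref{lemma1} holds.
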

Next we investigate the systems for which
\[
\limsup_{n \to \infty} \left| \sigma_n^{\alpha}(x,f) \right| < +\infty.
\]
holds for all functions $x\in G$ with $f'\in \mathrm{Lip_1}$. First, we state:
\begin{theorem}
	\label{theorem3}
	Let \((\varphi_n)\) be an ONS on \([0; 1]\) and \(q \in E(x)\) ($q=1, \ u\in [0,1] $). If, for some point \(x \in G\), the condition
	\[\left|H_n(x)\right| = O(1)\]
	holds,  then for any function \( f' \in Lip_1 \) the following estimate is valid at any point \( x \in G \):
	\[ \left|\sigma_n^{\alpha}(x,f)\right| = O(1).	\]
\end{theorem}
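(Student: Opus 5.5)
We reduce Theorem \ref{theorem3} to Theorem \ref{theorem1} through the representation of Lemma \ref{lemma3}. Fix $x\in G$ at which $H_n(x)=O(1)$, and let $f$ be such that $f'\in Lip_1$. Then $f\in C_L$, and by \eqref{2.8}
\[
\sigma_n^{\alpha}(x,f)=f(1)\,\sigma_n^{\alpha}(x,q)-\int_0^1 f'(u)\,Q_n(u,x)\,du = f(1)\,\sigma_n^{\alpha}(x,q)-B_n(f',x).
\]
It therefore suffices to show two things: that $\sigma_n^{\alpha}(x,q)=O(1)$, and that $B_n(f',x)=O(1)$ for the function $f'\in Lip_1$.

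For the first term we would use the hypothesis $q\in E(x)$. Since $g_k(1)=C_k(q)$, we have
\[
\int_0^1 Q_n(u,x)\,du=\sum_k D_{nk}^{\alpha}\int_0^1 g_k(u)\,du\,\varphi_k(x),
\]
and this differs from $\sigma_n^\alpha(x,q)$. I would therefore not try to derive the first bound from the second. Instead I would read the hypothesis $q\in E(x)$ in the sense actually needed, namely as boundedness of the $\sigma^\alpha_n(x,q)$ term (equivalently, by Lemma \ref{lemma3} with $f=q$, of the $f(1)$-part). If one insists on reading $E(x)$ literally as $B_n(q,x)=O(1)$, then the identity from integration by parts,
\[
\int_0^1 g_k(u)\,du = C_k(q)-\int_0^1 u\,\varphi_k(u)\,du,
\]
gives $B_n(q,x)=\sigma_n^\alpha(x,q)-\sigma_n^\alpha(x,u)$. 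In that reading we need control of $\sigma^\alpha_n(x,u)$, and we would obtain it from \eqref{2.8} applied to $f(u)=u$: this gives $\sigma_n^\alpha(x,u)=\sigma_n^\alpha(x,q)-B_n(q,x)$, which is circular. So the clean route is to take the $q$-hypothesis as the boundedness of $\sigma^\alpha_n(x,q)$. This interpretive point is where I expect the main friction.

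For the second term we apply Theorem \ref{theorem1} directly to the function $f'\in Lip_1$. Its hypotheses are $x\in G$, $q\in E(x)$ and $H_n(x)=O(1)$, all of which we have, and it yields $B_n(f',x)=O(1)$. To make the step self-contained, we recall the mechanism behind it, namely the identity \eqref{2.10} with $F=Q_n(\cdot,x)$ and $f'$ in place of $f$. The first sum there is bounded by
\[
\|f'\|_{Lip_1}\frac1n\sum_i\Bigl|\int_0^{i/n}Q_n\Bigr| = \|f'\|_{Lip_1}\,H_n(x)=O(1).
\]
The middle sum is bounded by
\[
\frac{\|f'\|_{Lip_1}}{n}\sum_i\int_{(i-1)/n}^{i/n}|Q_n|\le \|f'\|_{Lip_1}\Bigl(\frac1{n^2}\sum_{k\le n}\varphi_k^2(x)\Bigr)^{1/2},
\]
using Lemma \ref{Lemma2}, and this is $O(1)$ because $x\in G$. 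The last term is $f'(1)B_n(q,x)=O(1)$, since $q\in E(x)$.

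Combining the two bounds with $|f(1)|<\infty$ gives $\sigma_n^\alpha(x,f)=O(1)$ at the point $x\in G$, which is the assertion of Theorem \ref{theorem3}.
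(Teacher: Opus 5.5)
Your argument is the paper's argument. The paper also writes $\sigma_n^\alpha(x,f)=f(1)\sigma_n^\alpha(x,q)-B_n(f',x)$ via Lemma \ref{lemma3}. It bounds $B_n(f',x)$ by Theorem \ref{theorem1}, whose last term $f'(1)B_n(q,x)$ needs $q\in E(x)$ in the defined sense. It then simply asserts ``since $q\in E(\varphi,x)$, we obtain $\sigma_n^\alpha(x,q)=O(1)$''. So the friction you found is real, and the paper resolves it exactly as you do, only silently. Your identity $B_n(q,x)=\sigma_n^\alpha(x,q)-\sigma_n^\alpha(x,u)$, which comes from $\int_0^1 g_k(u)\,du=\int_0^1(1-t)\varphi_k(t)\,dt$, is correct.

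What you should make explicit is that your proof uses \emph{two} hypotheses: $B_n(q,x)=O(1)$ in the Theorem \ref{theorem1} step, and $\sigma_n^\alpha(x,q)=O(1)$ for the first term. Equivalently, you assume boundedness of both $\sigma_n^\alpha(x,q)$ and $\sigma_n^\alpha(x,u)$. As written, you say you reinterpret $q\in E(x)$ as the second condition and then invoke it as the first. Under both hypotheses your argument is complete.

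This strengthening cannot be avoided, because the statement read literally is false. The function $f\equiv 1$ has $f'=0\in Lip_1$, so the conclusion contains $\sigma_n^\alpha(x,q)=O(1)$, and the literal hypotheses do not give this. Here is a counterexample.
Fix a non-dyadic $x_0\in(0,\tfrac12)$. For $j\ge 2$:
\begin{itemize}
\item let $h_j$ be the $L_2$-normalized Haar function of level $j$ whose support contains $x_0$, signed so that $h_j(x_0)=2^{j/2}$;
\item let $\eta_j=2^{j/2}\mathbf{1}_{[1-2^{1-j},\,1-2^{-j}]}$;
\item set $\varphi_{2^j}=(h_j+\eta_j)/\sqrt2$.
\end{itemize}
Fill all other indices with orthonormal functions supported in $[\tfrac12,1]$ that have mean zero on each interval $[1-2^{1-j},1-2^{-j}]$; these vanish at $x_0$.

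Then $\varphi_{2^j}(x_0)C_{2^j}(q)=\tfrac12$ for every $j$, so $\sigma_n^\alpha(x_0,q)\ge c\log n$ for large $n$. On the other hand, $\int_0^s g_k(u)\,du=\int_0^s(s-t)\varphi_k(t)\,dt$, and $\bigl|\int_0^s(s-t)\varphi_{2^j}(t)\,dt\bigr|\le C2^{-3j/2}$ for all $s\in[0,1]$. Hence $\sup_{n,s}\bigl|\int_0^s Q_n(u,x_0)\,du\bigr|\le C\sum_j 2^{-j}<\infty$, which gives $B_n(q,x_0)=O(1)$ and $H_n(x_0)=O(1)$. Finally, $\frac{1}{n^2}\sum_{k\le n}\varphi_k^2(x_0)\le\frac1n$, so $x_0\in G$.
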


The sharpness of Theorem \ref{theorem3} is described in the following sense:
\begin{theorem}
	\label{theorem4}
	Let \((\varphi_n)\) be an ONS on \([0; 1]\). If, for some point \(x_0 \in G\), the condition
	\begin{equation}
		\label{eq3.17} \limsup_{n \to \infty}\left|H_n(x_0)\right| = +\infty
	\end{equation}
	holds, then there exists a function \(r\),  \( r' \in Lip_1 \) such that the following estimate is valid:
	\[ \limsup_{n \to \infty} \left|\sigma_n^{\alpha}(x_0,r)\right| = +\infty.	\]
\end{theorem}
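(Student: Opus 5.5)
Theorem \ref{theorem4} reduces to Theorem \ref{theorem2} via the representation in Lemma \ref{lemma3}. For $r$ with $r'\in Lip_1$, Lemma \ref{lemma3} gives
\[
\sigma_n^{\alpha}(x_0,r) = r(1)\,\sigma_n^{\alpha}(x_0,q) - B_n(r',x_0).
\]
So the task is to produce a function $h\in Lip_1$ with $\limsup_n |B_n(h,x_0)|=+\infty$, set $r(x)=\int_0^x h(t)\,dt$, and make sure the term $r(1)\sigma_n^\alpha(x_0,q)$ cannot cancel the blow-up. Theorem \ref{theorem2}, applied at $x_0\in G$ under \eqref{eq3.17}, supplies such an $h$. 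I would nevertheless recall its construction, since the plan is to build $h$ so that it also satisfies $\int_0^1 h=0$.

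The construction runs as follows. Apply identity \eqref{2.10} with $F=Q_n(\cdot,x_0)$ and $f=h$. This splits $B_n(h,x_0)$ into three pieces:
\begin{itemize}
\item the main sum $\sum_{i=1}^{n-1}(h(i/n)-h((i+1)/n))\int_0^{i/n}Q_n(u,x_0)\,du$;
\item a remainder, which is bounded by $\|h\|_{Lip_1}\frac1n\sum_i\int_{(i-1)/n}^{i/n}|Q_n|\le \|h\|_{Lip_1}\bigl(\frac1{n^2}\sum_{k\le n}\varphi_k^2(x_0)\bigr)^{1/2}$ by Lemma \ref{Lemma2}, and this is $O(1)$ because $x_0\in G$;
\item the term $h(1)\int_0^1 Q_n(u,x_0)\,du$.
\end{itemize}
For a given $n$, choose a piecewise linear $h_n$ with slopes $\pm1$ on $[i/n,(i+1)/n]$ matching the sign of $\int_0^{i/n}Q_n$. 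Then the main sum equals $nH_n(x_0)\cdot\frac1n=H_n(x_0)$, while $\|h_n\|_{Lip_1}\le 2$. A standard gliding-hump (or Banach--Steinhaus) argument along a sparse subsequence $n_j$ with $H_{n_j}(x_0)\to\infty$ then yields one $h=\sum_j c_j h_{n_j}$ with $\limsup|B_n(h,x_0)|=\infty$.

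To handle the extra term, I would modify each $h_n$ by subtracting a constant so that $h_n(1)=0$. Constants change neither the differences in the main sum nor the remainder estimate. The third term of \eqref{2.10} then vanishes for $h$, so $B_n(h,x_0)$ still diverges.

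Next, the normalization $r(1)=\int_0^1 h=0$ is needed. If the constructed $h$ has $\int_0^1 h=c\neq0$, I replace $h$ by $\tilde h=h-c\,\psi$, where $\psi\in Lip_1$ is a fixed function (for instance linear) with $\int_0^1\psi=1$ and $\psi(1)=0$. Since $\psi$ is a fixed $Lip_1$ function, the question is whether $B_n(\psi,x_0)$ stays bounded. Here I would use Theorem \ref{theorem1}, but only in the case $H_n(x_0)=O(1)$, which is exactly not our situation. The safe route is instead to choose $\psi$ among the building blocks themselves. Alternatively, note that $B_n(\psi,x_0)$ and $B_n(h,x_0)$ cannot both be unbounded with cancelling coefficients for every choice: if $B_n(h-c\psi)$ were bounded for $\psi=\psi_1$ and for $\psi=\psi_2$ (two linearly independent choices), then $B_n(\psi_1-\psi_2)$ would be bounded along the subsequence. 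Choosing $\psi_1-\psi_2$ equal to $h$ itself, after rescaling so that it has zero integral, gives a contradiction.

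The cleaner fallback avoids the normalization entirely: if $r(1)\neq0$, use the hypothesis $q\in E(x_0)$, i.e. $\sigma_n^\alpha(x_0,q)=O(1)$. That hypothesis is implicit in the setting of Theorem \ref{theorem3}, and I would invoke it if it is intended in Theorem \ref{theorem4}. In either case, with $r'=h\in Lip_1$,
\[
|\sigma_n^\alpha(x_0,r)|\ge |B_n(h,x_0)|-|r(1)\sigma_n^\alpha(x_0,q)| \to \infty
\]
along a subsequence. The main obstacle is exactly controlling the term $r(1)\sigma_n^\alpha(x_0,q)$; I would try the zero-mean normalization first.
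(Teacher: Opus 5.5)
\textbf{Gap.} The step you flag as the main obstacle is never closed: the term $r(1)\,\sigma_n^{\alpha}(x_0,q)$ from Lemma~\ref{lemma3} is not controlled.

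\begin{itemize}
\item The zero-mean normalization is not established. In your two-$\psi$ argument, $\psi_1-\psi_2$ has integral $0$ while $h$ has integral $c\neq 0$. So $\psi_1-\psi_2=h$ is impossible and no contradiction arises. The suggestion to ``choose $\psi$ among the building blocks'' is not carried out.
\item The mean-zero route may not be available at all. Nothing in your argument excludes $Q_n(\cdot,x_0)=\lambda_n+R_n$ with $\lambda_n\to\infty$ and $\int_0^1|R_n(u)|\,du=O(1)$. In that case $H_n(x_0)\approx\lambda_n/2\to\infty$, yet $|B_n(h,x_0)|\le\|h\|_C\int_0^1|R_n|=O(1)$ for every $h$ with $\int_0^1 h=0$.
\item Your fallback simply assumes $\sigma_n^{\alpha}(x_0,q)=O(1)$, which is not a hypothesis of Theorem~\ref{theorem4}. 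It is also not what $q\in E(x_0)$ says. That condition means $\int_0^1 Q_n(u,x_0)\,du=O(1)$, whereas $\sigma_n^{\alpha}(x_0,q)=Q_n(1,x_0)$.
\end{itemize}

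\textbf{Fix.} The paper removes this term with a case split.
\begin{itemize}
\item If $\sigma_n^{\alpha}(x_0,q)$ is unbounded, then $r\equiv 1$, with $r'=0\in Lip_1$, already satisfies the conclusion.
\item Otherwise $r(1)\,\sigma_n^{\alpha}(x_0,q)=O(1)$ for every $r$, and Theorem~\ref{theorem2} together with Lemma~\ref{lemma3} gives divergence.
\end{itemize}

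Your own set-up can be repaired even more directly. The condition $r'=h$ determines $r$ only up to an additive constant, so there is no reason to impose $r(0)=0$, which is what tied $r(1)$ to $\int_0^1 h$. Take $r(x)=-\int_x^1 h(t)\,dt$. Then $r(1)=0$, and Lemma~\ref{lemma3} gives $\sigma_n^{\alpha}(x_0,r)=-B_n(h,x_0)$ exactly, for the $h\in Lip_1$ supplied by Theorem~\ref{theorem2}. No condition on $\int_0^1 h$ and no case analysis are needed.

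The rest of your proposal is sound: the reduction via Lemma~\ref{lemma3}, the grid-adapted $h_n$ normalized by $h_n(1)=0$, the Lemma~\ref{Lemma2} bound on the remainder, and the Banach--Steinhaus step.
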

Finally, we state the following related existence result:
\begin{theorem}
	\label{theorem5}
	There exists an ONS \( (\vartheta_n) \) and a function \( g \in C_L \), such that
	\begin{itemize}
		\item[(a)] \(\displaystyle \int_0^1 \vartheta_n(u) \, du = 0, \quad n = 1, 2, \ldots\)
		\vspace{0.3cm}
		\item[(b)] \(\displaystyle \limsup_{n \to \infty} \left| \sigma_n^\alpha(x, g, \vartheta) \right| = +\infty, \quad \text{a. e. } x \in \left[0, 1\right]\),
	\end{itemize}
	where
	\begin{equation*}
		\sigma_n^\alpha(x, g, \vartheta) = \sum_{k=1}^{n} D_{nk}^{\alpha} C_n(g,\vartheta)\vartheta_n(x),
	\end{equation*}
	and
	\begin{equation*}
		C_n(g, \vartheta)= \int_0^1 g(u) \vartheta_n(u) \, du, \quad n = 1,2, 3 \cdots .
	\end{equation*}
\end{theorem}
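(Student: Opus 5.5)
We need to prove Theorem 5: there is an ONS $(\vartheta_n)$ with mean zero and a $g\in C_L$ whose Cesàro means diverge a.e.

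The plan is to start from Theorem A and convert it into a system with zero mean by a standard extension. Take $g(u)=u$, or any simple $C_L$ function that is not orthogonal to everything, say $g\equiv 1$ on $[0,1]$. We want to realize $(\vartheta_n)$ on $[0,1]$ as a rescaled copy of a system living on two halves. On $[0,1/2]$ we place a rescaled Banach-type system associated with $g$. On $[1/2,1]$ we place a reflected negative copy, so that each $\vartheta_n$ integrates to zero.

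Concretely, let $(\psi_n)$ be the ONS on $[0,1]$ given by Theorem A for the function $h(t)=g(t/2)$, which lies in $L_2$ and is not $\sim 0$. Then $\limsup_n|\sigma_n^\alpha(t,h,\psi)|=+\infty$ for a.e. $t\in[0,1]$. Define
\[
\vartheta_n(u)=\psi_n(2u)\ \ (u\in[0,\tfrac12)),\qquad \vartheta_n(u)=-\psi_n(2u-1)\ \ (u\in[\tfrac12,1]).
\]
Orthonormality follows by changing variables on each half: each half contributes $\tfrac12\delta_{nm}$. Property (a) is immediate, since the two halves cancel.

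For the coefficients, choose $g$ so that its second-half contribution reinforces rather than cancels the first. The clean choice is to make $g$ odd about $1/2$, i.e. $g(1-u)=-g(u)$ after the reflection, or simpler, to reflect the second copy via $\psi_n(2-2u)$ and take $g$ with $g(u)=-g(1-u)$. Then $C_n(g,\vartheta)=c\,C_n(h,\psi)$ with $c=1$, so $\sigma_n^\alpha(u,g,\vartheta)=\pm\sigma_n^\alpha(2u\text{ or }2-2u,h,\psi)$, which diverges a.e. on each half. A concrete choice is $g(u)=\sin(\pi(u-\tfrac12))$, or $g(u)=u-\tfrac12$, which is in $C_L$ since $g'$ is constant. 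With this $g$ we have $h(t)=g(t/2)=t/2-1/2\not\sim0$, so Theorem A applies.

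The main point to check carefully is the bookkeeping of the reflection, so that the coefficients add and do not cancel. We will compute
\[
\int_{1/2}^1 g(u)\bigl(-\psi_n(2-2u)\bigr)\,du=\tfrac12\int_0^1 -g(1-\tfrac s2)\psi_n(s)\,ds=\tfrac12\int_0^1 h(s)\psi_n(s)\,ds,
\]
using the oddness of $g$. The cancellation of the integral in (a) is preserved under the reflected version, since $\int_{1/2}^1\psi_n(2-2u)\,du=\tfrac12\int\psi_n$.

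Finally, the null sets transfer under the affine maps, which gives (b) almost everywhere on $[0,1]$.
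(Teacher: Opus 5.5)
Your proof is correct and follows the paper's strategy: apply Theorem A to an auxiliary function, transplant the resulting system onto $[0,\tfrac12)$ and $[\tfrac12,1]$ with a sign change on the second half to force $\int_0^1\vartheta_n=0$, and identify $C_n(g,\vartheta)$ with a fixed multiple of the original coefficients, so the Ces\`aro means become $\pm$ the divergent ones at an affine image of $x$. The only difference is how the second half is neutralised. The paper keeps the translated copy $-\varphi_n(2u-1)$ and takes $g$ vanishing on $[\tfrac12,1]$, which is why it needs $f(u)=1-\cos(4\pi(u-\tfrac12))$ with $f(1)=f'(1)=0$ to glue into $C_L$. You instead use the reflected copy $-\psi_n(2-2u)$ with an odd $g$ such as $u-\tfrac12$, so both halves contribute $\tfrac12 C_n(h,\psi)$. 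In the write-up, state the reflected $\vartheta_n$ as your definition from the outset: your draft first defines the translated copy $-\psi_n(2u-1)$, and with that copy oddness of $g$ would not make the two halves reinforce.
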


\section{Proofs of the Theorems}\label{sec4}

\begin{proof}[Proof of Theorem \ref{theorem1}]
	
	If we substitute \( F(x) = Q_n(u,x) \) into the equation \eqref{2.10} in Theorem B, we obtain that
	\begin{align}
		\int_0^1 f(u) Q_n(u, x) \, du &= 
		\sum_{i=1}^{n-1} \left( \left(f \left(\frac{i}{n} \right) - f\left(\frac{i+1}{n} \right) \right) 
		\int_0^{\frac{i}{n}} Q_n(u, x) \, du \right) \notag \\
		\label{eq:3.2} &\quad + \sum_{i=1}^{n-1} \int_{\frac{i-1}{n}}^{\frac{i}{n}} \left(f(u) - f \left(\frac{i}{n} \right)\right) Q_n(u, x) \, du \\
		& \quad + f(1) \int_0^1 Q_n(u, x) \, du  := I_1 + I_2 + I_3. \notag
	\end{align}
	Morover, by using \eqref{eq:3.1} and the fact that  \(f\in Lip_1,\) we find that
	\begin{align*}
		|I_1| &=  O\left(\tfrac{1}{n}\right) \sum_{i=1}^{n-1}  \left| \int_0^{\frac{i}{n}} Q_n(u, x) \, du \right|   = O(1)	H_n(x) = O(1).
	\end{align*}
	
	Next, since \( f\in Lip_1 \) and \( x \in G \), by applying inequality \eqref{2.7} from Lemma \ref{Lemma2} and Lemma \ref{lemma1} \( \left( D_{nk}^\alpha < 1\right) \), we obtain that
	\begin{align*}
		|I_2| 
		&\leq  O\left( \tfrac{1}{n} \right) \sum_{i=1}^n \int_{\frac{i-1}{n}}^{\frac{i}{n}} |Q_n(u, x)| \, du = O(1) \cdot \tfrac{1}{n} \int_0^1 |Q_n(u, x)| \, du \notag \\
		&= O(1) \cdot \tfrac{1}{n} \left( \int_0^1 Q_n^2(u, x) \, du \right)^{\frac{1}{2}} = O(1) \cdot \tfrac{1}{n} \left( \int_0^1 \left( \sum_{k=1}^n D_{nk}^{\alpha} g_k(u) \varphi_k(x) \right)^2 du \right)^{\frac{1}{2}} \notag \\
		&= O(1) \cdot \tfrac{1}{n} \left( \int_0^1 \sum_{k=1}^n g_k^2(u) du \sum_{k=1}^n \varphi_k^2(x) \right)^{\frac{1}{2}} \leq O(1) \cdot \tfrac{1}{n} \left( \sum_{k=1}^n \varphi_k^2(x) \right)^{\frac{1}{2}} \notag \\
		& = O(1) \cdot \left( \tfrac{1}{n^2} \sum_{k=1}^n \varphi_k^2(x) \right)^{\frac{1}{2}} = O(1) \notag .
	\end{align*}
	Furthermore, if \(q(u) = 1, \ u\in[0,1],\) according to the conditions of Theorem A, where \( q \in E(x), \) we get that
	\begin{equation*}
		|I_3| \leq \left|f(1) \int_0^1 Q_n(u, x) du \right| = \left|f(1)\right|\cdot \left| \int_0^1 q(u) Q_n(u,x) \, du \right| = \left|f(1)\right|\cdot\left|B_n(q,x) \right|= O(1).
	\end{equation*}
	
	Finally, taking into account the estimates above of \(\left| I_1 \right|, \left| I_2 \right| \text{ and } \left| I_3 \right|
	\) in \eqref{eq:3.2} we find that, for any \(f\in Lip_1,\) we can conclude that
	\begin{equation}
		\label{3.3} B_n(f,x) =\int_0^1 f(u) Q_n(u, x) du = O(1),
	\end{equation}
	and the proof is complete.
	
\end{proof}

\begin{proof}[Proof of Theorem \ref{theorem2}] First, we suppose that 
	\begin{equation*}
		\limsup_{n \to \infty} \left| \int_0^1 Q_n(u, x_0) \right| = +\infty.
	\end{equation*} 
	Then (see \eqref{3.3} with \(f=q\))
	\begin{equation*}
		\limsup_{n \to \infty} \left| B_n(q, x_0) \right| = \limsup_{n \to \infty} \left| \int_0^1 Q_n(u, x_0) \right|=+\infty.
	\end{equation*}
	and since \(q\in Lip_1\), \eqref{eq:3.3} holds for this case.
	
	Next, we assume that
	\begin{equation}
		\label{eq3.5} \left| B_n (q, x_0) \right| = O(1).
	\end{equation}
	
	On the space \( \mathrm{Lip}_1 \), we consider the sequence of functions \( (r_n) \), defined as follows:
	
	\begin{equation}\label{eq3.6}
		r_n(u) = \int_0^u \operatorname{sign} \left( \int_0^y Q_n(v, x_0) \, dv \right) dy, \quad n = 1, 2, \ldots
	\end{equation}
	
	The following equality is valid (see \eqref{eq:3.2}, with \(f=r_n\))
	\begin{align}\label{eq3.7}
		\int_0^1 r_n(u) Q_n(u, x_0) \, du 
		&= \sum_{i=1}^{n-1} \left( r_n\left(\frac{i}{n}\right) - r_n\left(\frac{i+1}{n}\right) \right) \int_0^{\frac{i}{n}} Q_n(u, x_0) \, du \notag \\
		&\quad + \sum_{i=1}^{n-1} \int_{\frac{i-1}{n}}^{\frac{i}{n}} \left( r_n(u) - r_n\left(\frac{i}{n}\right) \right) Q_n(u, x_0) \, du  \\
		&\quad + r_n(1) \int_0^1 Q_n(u, x_0) du := A_1 + A_2 + A_3. \notag
	\end{align}
	
	Next, we proceed to by estimating the quantities \( |A_1| \), \( |A_2| \), and \( |A_3| \).
	
	By equation \eqref{2.6} (see  Lemma \ref{Lemma2}) we have that
	\begin{align}\label{al1}
		|A_2| &\leq \frac{1}{n} \sum_{i=1}^{n-1} \int_{\frac{i-1}{n}}^{\frac{i}{n}} |Q_n(u, x_0)| \, du \leq \frac{1}{n} \int_0^1 |Q_n(u, x_0)| \, du\\
		&= O(1)\frac{1}{n} \left(\sum_{k=1}^n \varphi_k^2(x_0) \right)^{\frac{1}{2}} = O(1).	\notag
	\end{align}
	Moreover, by taking into account  \eqref{eq3.5} and \eqref{eq3.6} we get that
	\begin{equation} \label{eq2}
		|A_3| = r_n(1) \cdot \left| \int_0^1 Q_n(u, x_0) \, du \right| = O(1) \cdot B_n(q,x_0) = O(1).
	\end{equation}
	
	By \( E_n \) we denote the set of all indices \( i \), \( i \in \{1, 2, \ldots, n-1\} \), for which there exists a point \( t \in \left[\frac{i}{n}, \frac{i+1}{n}\right) \) such that
	\[
	\operatorname{sign} \int_0^{\frac{i}{n}} Q_n(u, x_0) \, du \neq \operatorname{sign} \int_0^t Q_n(u, x_0) \, du.
	\]
	
	If \( i \in E_n \), then, due to the continuity of the function	\( \int_0^t Q_n(u, x_0) \, du\) on the interval \([0,1]\), there exists some \( t_{in} \in \left[\frac{i}{n}, \frac{i+1}{n}\right) \) such that
	
	\[
	\int_0^{t_{in}} Q_n(u, x_0) \, du = 0.
	\]
	Consequently, we have that
	\[
	\int_0^{\frac{i}{n}} Q_n(u, x_0) \, du = \int_{t_{in}}^{\frac{i}{n}} Q_n(u, x_0) \, du.
	\]
	For this reason, by \eqref{2.7}, we find that
	\begin{align*}
		&\sum_{i \in E_n} \left| \int_0^{\frac{i}{n}} Q_n(u, x_0) \, du \right|
		\leq \sum_{i=1}^{n-1} \left| \int_{t_{in}}^{\frac{i}{n}} Q_n(u, x_0) \, du \right| \leq \int_0^1 |Q_n(u, x_0)| \, du \notag \\
		&= \left( \int_0^1 Q_n^2(u, x_0) \, du \right)^{\frac{1}{2}} = \left( \int_0^1 \left( \sum_{k=1}^n D_{nk}^\alpha g_k(u) \varphi_k(x_0) \right)^2 du \right)^{\frac{1}{2}} \notag \\
		&\leq\left( \int_0^1 \sum_{k=1}^n g_k^2(u) \, du \cdot \sum_{k=1}^n \varphi_k^2(x_0) \right)^{\frac{1}{2}} \leq \left( \sum_{k=1}^n \varphi_k^2(x_0) \right)^{\frac{1}{2}}.
	\end{align*}
	From here and Lemma \ref{lemma1}, we can conclude that 
	\begin{align}\label{eq:3.10}
		\frac{1}{n} \sum_{i \in E_n} \left| \int_0^{\frac{i}{n}} Q_n(u, x_0) \, du \right| = O(1)\frac{1}{n} \left( \sum_{k=1}^n \varphi_k^2(x_0) \right)^{\frac{1}{2}} = O(1).
	\end{align}
	
	Next we define the set \( F_n := \{1, 2, \ldots, n-1\} \setminus E_n \). Then, for any \( i \in F_n \), by the definition of \( E_n \), the following identity holds:
	\[
	\left( r_n\left(\frac{i}{n}\right) - r_n\left(\frac{i+1}{n}\right) \right) \int_0^{\frac{i}{n}} Q_n(u, x_0) \, du = -\frac{1}{n} \left| \int_0^{\frac{i}{n}} Q_n(u, x_0) \, du \right|.
	\]
	Therefore, by using \eqref{eq3.6} it immediately follows that
	\begin{align}\label{eq:3.11}
		\left| \sum_{i \in F_n} \left( r_n \left(\frac{i}{n}\right) - r_n \left(\frac{i+1}{n}\right) \right) \int_0^{\frac{i}{n}} Q_n(u, x_0) \, du \right|
		=  \frac{1}{n} \sum_{i \in F_n} \left| \int_0^{\frac{i}{n}} Q_n(u, x_0) \, du \right| 
	\end{align}
	\begin{align}
		& = -\frac{1}{n} \sum_{i=1}^{n-1} \left| \int_0^{\frac{i}{n}} Q_n(u, x_0) \, du \right| - \frac{1}{n} \sum_{i \in E_n} \left| \int_0^{\frac{i}{n}} Q_n(u, x_0) \, du \right| \notag \\
		& = H_n(x_0) - \frac{1}{n} \sum_{i \in E_n} \left| \int_0^{\frac{i}{n}} Q_n(u, x_0) \, du \right|. \notag
	\end{align}
	On the other hand, if \( i \in E_n \), then (see \eqref{eq:3.10}) we have that
	\begin{align}\label{eq:3.12}
		\left| \sum_{i \in E_n} \left( r_n \left(\frac{i}{n}\right) - r_n \left(\frac{i+1}{n}\right) \right) \int_0^{\frac{i}{n}} Q_n(u, x_0) \, du \right| \leq \frac{1}{n} \sum_{i \in E_n} \left| \int_0^{\frac{i}{n}} Q_n(u, x_0) \, du \right| = O(1).
	\end{align}
	Hence, by combining \eqref{eq:3.11} and \eqref{eq:3.12} , we obtain that
	\begin{align}
		\label{eq:3.13}
		|A_1| &= \left| \sum_{i=1}^{n-1} \left( r_n\left(\frac{i}{n}\right) - r_n\left(\frac{i+1}{n}\right) \right) \int_0^{\frac{i}{n}} Q_n(u, x_0) \, du \right| \\
		&= \left| \sum_{i\in F_n} \left( r_n\left(\frac{i}{n}\right) - r_n\left(\frac{i+1}{n}\right) \right) \int_0^{\frac{i}{n}} Q_n(u, x_0) \, du\right. \notag \\
		& +\left. \sum_{i\in E_n} \left( r_n\left(\frac{i}{n}\right) - r_n\left(\frac{i+1}{n}\right) \right) \int_0^{\frac{i}{n}} Q_n(u, x_0) \, du \right| \geq H_n(x_0) - O(1). \notag
	\end{align}
	Finally, by combining \eqref{eq3.7}, \eqref{al1}, \eqref{eq2} and \eqref{eq:3.13}, we find that
	\begin{equation*}
		\left| \int_0^1 r_n(u) Q_n(u, x_0) \, du \right| \geq H_n(x_0) - O(1).
	\end{equation*}
	Hence, by keeping in mind the conditions of Theorem \ref{theorem2}, we can conclude that
	\begin{equation}\label{eq:3.14}
		\limsup_{n \to \infty} \left| \int_0^1 r_n(u) Q_n(u, x_0) \, du \right| = +\infty, \text{ or } \limsup_{n \to \infty} \left| B_n(r_n,x_0 \right| = +\infty.
	\end{equation}
	
	Moreover, it is easy to see that (\(x \neq y\))
	\begin{equation}\label{eq:3.15}
		\|f_n\|_{\mathrm{Lip}_1} = \|f_n\|_C + \sup_{x,y \in [0,1]} \frac{|f_n(x) - f_n(y)|}{|x - y|} \leq 2.
	\end{equation}
	Therefore, according to the Banach-Steinhaus Theorem, \eqref{eq:3.14} and \eqref{eq:3.15}, we can conclude that there exists a function \( r \in \mathrm{Lip}_1 \) such that
	\begin{equation}\label{eq:3.16}
		\limsup_{n \to \infty} \left| \int_0^1 B_n(r, x_0) \, du \right| = +\infty.
	\end{equation}
	The proof is complete.
	
\end{proof}

\begin{proof}[Proof of Theorem \ref{theorem3}]
	According to Theorem \ref{theorem1}, we find that
	\begin{equation}
		\label{eq3.15} B_n(f',x) = O(1)
	\end{equation}
	for any \(f' \in Lip_1\). Moreover, by using equation \eqref{2.8} from Lemma \ref{lemma3}, we find that
	\begin{equation*}
		\sigma_n^{\alpha}(x,f) = f(1) \sigma_n^{\alpha}(x,q) -\int_0^1 f'(u) Q_n(u,x)du = f(1) \sigma_n^{\alpha}(x,q) - B_n(f',x),
	\end{equation*}
	where \(q(u) = 1, \ u \in [0,1]\). Therefore, since \(q \in E(\varphi,x)\), we obtain that
	\[\sigma_n^{\alpha}(x,q) = O(1).\]
	It follows that
	\begin{equation}
		\label{eq16}\sigma_n^{\alpha}(x,f) = \sigma_n^{\alpha}(x,q) -B_n(f',x),
	\end{equation}
	and the assumption $q \in E(x)$, together with \eqref{eq3.15}, imply that
	\[\sigma_n^{\alpha}(x,f) = O(1).\]
	and the proof is complete.
\end{proof}

\begin{proof}[Proof of Theorem \ref{theorem4}]
	First, we note that 
	\begin{equation}
		\label{eq3.18} \sigma_n^{\alpha}(x_0,r) = O(1).
	\end{equation}
	otherwise, since \(r'\in lip_1\) Theorem \ref{theorem4} is proven.
	
	Next we suppose that \eqref{eq3.18} is fulfilled. From condition \eqref{eq3.17} and by Theorem \ref{theorem2}, we get that there exists a function \(r, \ r' \in Lip_1\) such that
	\begin{equation}
		\label{eq3.19} \limsup_{n \to \infty}\left|B_n(r',x_0)\right| = +\infty.
	\end{equation}
	Hence, by taking into account \eqref{eq16} when \(f=r\) we can conclude (see \eqref{2.8}, \eqref{eq3.18}, \eqref{eq3.19}) that
	\[ \limsup_{n \to \infty} \left|\sigma_n^{\alpha}(x_0,r)\right| = \limsup_{n \to \infty} \left|\sigma_n^{\alpha}(x_0,r)-B_n(r',x_0)\right|=+\infty.	\]
	The proof is complete.
	
\end{proof}

\begin{remark}
	This means that the condition of Theorem \ref{theorem3}, \(q\in E(x_0)\) (\(q(u)=1, u\in [0,1]\)), does not guarantee that
	\[ \limsup_{n \to \infty} \left|\sigma_n^{\alpha}(x_0,f)\right| < +\infty,	\]
	for any function \(f \in C_L.\) 
\end{remark}

\begin{proof}[Proof of Theorem \ref{theorem5}]
	We put 
	\[
	f(u) = 1 - \cos\big(4\pi(u - \frac{1}{2})\big).
	\]
	According to the Theorem A, there exists an ONS  \((\phi_n)\) such that, a. e. on  \([0,1]\),
	\begin{equation}\label{eq*}
		\limsup_{n \to \infty} \left| \sigma_n^{\alpha}(x, f, \varphi) \right| = +\infty.
	\end{equation}
	We define an ONS \((\vartheta_n)\)  as follows:
	\[
	\vartheta_n(u) = 
	\begin{cases}
		\varphi_n(2u), & \text{ when } u \in \left[0, \tfrac{1}{2}\right), \\
		-\varphi_n \left(2\left(u-\frac{1}{2} \right) \right), & \text{when } u \in \left[\tfrac{1}{2}, 1\right].
	\end{cases}
	\]              
	It is easy to prove that \((\vartheta_n)\)  is an ONS on \([0,1]\) and \(\int_0^1\vartheta_n (u)du = 0, \ \ n=0, 1, \dots .\)
	
	Thus, the ONS \((\vartheta_n)\) satisfies the conditions a) of Therefore \ref{theorem5}.
	
	Now we consider the function \( g(u) \) defined by
	\[
	g(u) = 
	\begin{cases}
		f(2u), & \text{for } u \in \left[0, \tfrac{1}{2}\right), \\
		0, & \text{for } u \in \left[\tfrac{1}{2}, 1\right).
	\end{cases}
	\]
	Then,
	\[
	g'(u) = 
	\begin{cases}
		4\pi \sin\left(4\pi(u - \tfrac{1}{2})\right), & \text{for } u \in \left[0, \tfrac{1}{2}\right), \\
		0, & \text{for } u \in \left[\tfrac{1}{2}, 1\right).
	\end{cases}
	\]
	Obviously \(g' \in Lip_1\) and 
	
	\begin{align*}
		C_n(g,\vartheta) &= \int_0^1g(u) \vartheta_n(u) du = \int_{0}^{\frac{1}{2}} f(2u) \varphi_n (2u) du
		C_n(g,\vartheta) = \int_0^1g(u) \vartheta_n(u) du \\
		&=\frac{1}{2}\int_{0}^{1} f(u) \varphi_n (u) du = \frac{1}{2} C_n (f, \varphi),
	\end{align*}
	and it's follows that
	\begin{equation*}
		\sigma_n^\alpha(x, g, \vartheta) =\frac{1}{2}\sigma_n^\alpha(x, f, \varphi).
	\end{equation*}
	Thus, in view of \eqref{eq*}, also the condition (b) of Theorem \ref{theorem5} is fulfilled. The proof is complete.
	
\end{proof}

\section{Final remarks and results}\label{sec5}
The condition (see \eqref{eq:3.1})
\[H_n(x) = O(1)\]
is said to be efficient if it is easily verified for classical ONS (trigonometric, Haar, Walsh, and other systems).

\begin{proposition}
	\label{theorem4.1} We define the sequence of functions \((\varphi_n)\) on the interval \([0,1]\) by
	\[
	\varphi_n(u) = \sqrt{2} \cos(2\pi n u).
	\]
	Then, for any \(x \in [0, 1]\), the following efficiency estimate holds:
	\[
	H_n(x) = O(1).
	\]
\end{proposition}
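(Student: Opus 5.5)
We compute $\int_0^{t} Q_n(u,x)\,du$ explicitly for the cosine system and then bound $H_n(x)$ by hand. For $k\ge1$ we have
\[
g_k(u)=\int_0^u\sqrt2\cos(2\pi k t)\,dt=\frac{\sqrt2\,\sin(2\pi k u)}{2\pi k}.
\]
Hence, with $t=i/n$,
\[
\int_0^{t} g_k(u)\,du=\frac{\sqrt2\,(1-\cos(2\pi k t))}{(2\pi k)^2}.
\]
By \eqref{2.3} this gives
\[
\int_0^{i/n} Q_n(u,x)\,du=\sum_{k=1}^{n-1} D_{nk}^{\alpha}\,\frac{2\,(1-\cos(2\pi k i/n))\cos(2\pi k x)}{4\pi^2k^2}.
\]
The $k=0$ term is absent or harmless: if $\varphi_0$ is included as the constant $1$, then $g_0(u)=u$ and its contribution is $D_{n0}^\alpha (i/n)^2/2\le 1$.

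The key observation is that no cancellation is needed. Since $0\le D_{nk}^\alpha<1$, $|1-\cos(\cdot)|\le 2$ and $|\cos(2\pi k x)|\le1$, each summand is bounded in absolute value by $1/(\pi^2 k^2)$. Therefore, uniformly in $i$, $n$ and $x\in[0,1]$,
\[
\left|\int_0^{i/n} Q_n(u,x)\,du\right|\le \sum_{k=1}^\infty \frac{1}{\pi^2k^2}+1=\frac16+1 .
\]
Averaging over $i=1,\dots,n-1$ then gives
\[
H_n(x)=\frac1n\sum_{i=1}^{n-1}\left|\int_0^{i/n}Q_n(u,x)\,du\right|\le \frac{n-1}{n}\cdot\frac76=O(1),
\]
and this holds for every $x\in[0,1]$, not only almost everywhere.

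The plan has three steps:
\begin{enumerate}
\item Compute $g_k$ and its primitive in closed form.
\item Interchange the finite sum with the integral over $[0,i/n]$.
\item Bound the resulting series termwise by the summable majorant $k^{-2}$, using $D_{nk}^\alpha<1$ as in Lemma \ref{Lemma2}.
\end{enumerate}
Only the first step carries any real content. The double integration produces the factor $k^{-2}$, which is what makes the trigonometric system behave well, in contrast to the general bound of Lemma \ref{lemma1}, which is only $O(\sqrt n)$.

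The main obstacle is bookkeeping rather than analysis. One must check the indexing convention (whether $\varphi_0$ is present, and the range $k=0,\dots,n-1$ in \eqref{2.3}) and that $D_{nk}^\alpha$ is indeed bounded by $1$. The latter follows from $A^\alpha_{m}$ being increasing in $m$ for $\alpha>0$, so $A_{n-k}^\alpha\le A_n^\alpha$. Once these are confirmed, the estimate is uniform and no summation by parts or Fejér-type kernel estimate is required.
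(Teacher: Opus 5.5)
Your argument is correct. Its outer skeleton matches the paper's: compute $g_k$ explicitly for the cosine system, bound $\bigl|\int_0^{i/n}Q_n(u,x)\,du\bigr|$ uniformly in $i$ (the paper passes to $\max_i$), and average. The key estimate, however, is different.

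The paper integrates only once. It writes the quantity as $\int_0^{i/n}\frac{1}{2\pi}\sum_k D_{nk}^{\alpha}k^{-1}\sin(2\pi k u)\cos(2\pi k x)\,du$ and bounds it by the $L^2[0,1]$ norm of the integrand (Cauchy--Schwarz in $u$). It then uses the orthogonality of the functions $\sin(2\pi k u)$, so that the coefficients $k^{-1}$ enter squared, giving $O\bigl((\sum_k k^{-2})^{1/2}\bigr)$. Since $\sum_k k^{-1}$ diverges, orthogonality is what makes that route work.

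You instead carry out the second integration in closed form. This produces an extra factor $k^{-1}$ and the bounded factor $1-\cos(2\pi k i/n)$, so the plain triangle inequality against the summable majorant $k^{-2}$ suffices. No orthogonality is needed, the constant is explicit, and you in fact bound $\sup_{t\in[0,1]}\bigl|\int_0^t Q_n(u,x)\,du\bigr|$ uniformly in $x$.

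The paper's route, in turn, yields the stronger statement $\int_0^1|Q_n(u,x)|\,du\le\|Q_n(\cdot,x)\|_{L^2}=O(1)$ uniformly in $x$. That is, it controls the kernel $Q_n$ itself and not just its primitive, which is more than $H_n(x)=O(1)$ requires. Since $|B_n(f,x)|\le\|f\|_C\int_0^1|Q_n(u,x)|\,du$, it would even give boundedness of $B_n(f,x)$ for every continuous $f$. It also fits the $L^2$ estimates used in Lemma~\ref{Lemma2}.
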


\begin{proof}
	By the Cauchy inequality and the fact that \( D_{nk}^\alpha < 1 \), we can estimate as follows:
	
	\begin{align*}
		T_n(x) &= \frac{1}{n} \sum_{i=1}^{n-1} \left| \int_0^{\frac{i}{n}} Q_n(u,x) \, du \right| = \frac{2}{n} \sum_{i=1}^{n-1} \left| \int_0^{\frac{i}{n}} \sum_{k=1}^n D_{nk}^{\alpha} \left( \int_0^u \cos(2\pi k v) \, dv \right) \cos(2\pi k x) \, du \right| \notag \\
		&= O(1) \cdot \max_{1 \leq i \leq n} \left| \int_0^{\frac{i}{n}} \frac{1}{2\pi} \sum_{k=1}^n D_{nk}^\alpha \cdot \frac{1}{k} \sin(2\pi k u) \cos(2\pi k x) \, du \right| \notag \\
		&= O(1) \cdot \left( \int_0^1 \left( \sum_{k=1}^n \frac{1}{k^2} \right) \, du \right)^{\frac{1}{2}} = O(1).
	\end{align*}
	
	The proof is complete.
\end{proof}

\begin{remark}
	It seems that our conclusions are also correct for other orthonormal systems. In Theorem \ref{theorem4.1}, we pointed out the fact that our results are also valid for trigonometric systems. However, our conclusions are also consistent for other systems. Next, we state and prove this fact for the Haar system.
	
\end{remark}

\begin{proposition}
	\label{theorem4.2}
	Let \( (X_n) \) be the Haar system. Then for any \( x \in [0,1] \), 
	\[
	H_n(x) = O(1).
	\]
\end{proposition}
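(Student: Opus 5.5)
We need $H_n(x)=\frac1n\sum_{i=1}^{n-1}\bigl|\int_0^{i/n}Q_n(u,x)\,du\bigr| = O(1)$ for the Haar system. Since $H_n(x)\le \max_{1\le i\le n}\bigl|\int_0^{i/n}Q_n(u,x)\,du\bigr|$, it suffices to show that
\[
\sup_{t\in[0,1]}\Bigl|\int_0^{t}Q_n(u,x)\,du\Bigr| = O(1)
\]
uniformly in $n$ and $x$. Writing $G_k(t)=\int_0^t g_k(u)\,du$, we have
\[
\int_0^t Q_n(u,x)\,du=\sum_{k} D_{nk}^{\alpha}\,G_k(t)\,X_k(x),
\]
so everything reduces to bounding this sum using the localization of the Haar functions.

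\textbf{Step 1: the functions $g_k$ and $G_k$.} I would compute them explicitly. Let $X_k$, with $k=2^m+j$, be supported on the dyadic interval $\Delta_k$ of length $2^{-m}$, with $|X_k|=2^{m/2}$ there and $\int X_k=0$. Then $g_k$ is a ``tent'' supported on $\Delta_k$ with $\|g_k\|_\infty = 2^{m/2}\cdot 2^{-m-1}=2^{-m/2-1}$, and $g_k\ge 0$. Consequently $G_k(t)$ is nondecreasing, vanishes to the left of $\Delta_k$, and is constant to the right of $\Delta_k$, equal to
\[
\int g_k = 2^{-m/2-1}\cdot 2^{-m}\cdot\tfrac12 = 2^{-3m/2-2}.
\]
Hence $|G_k(t)|\le c\,2^{-3m/2}$ for all $t$. (The first function $X_0=1$ gives $g_0(u)=u$, $|G_0|\le\tfrac12$, a bounded term.)

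\textbf{Step 2: level-by-level summation in $x$.} Fix $x$. For each level $m$ there is at most one $k$ in the block $2^m\le k<2^{m+1}$ with $x\in\Delta_k$; for that $k$, $|X_k(x)|\le 2^{m/2}$, and all other $X_k(x)$ vanish. Using $0<D_{nk}^\alpha\le 1$,
\[
\Bigl|\int_0^t Q_n(u,x)\,du\Bigr|\le \tfrac12+\sum_{m\ge0} c\,2^{-3m/2}\,2^{m/2} = \tfrac12+c\sum_{m\ge 0}2^{-m}=O(1),
\]
uniformly in $t$, $x$ and $n$. This gives $H_n(x)=O(1)$ for every $x\in[0,1]$; in fact it gives the stronger bound $\frac1n\sum_{i}|\cdot|\le \max_i|\cdot|$.

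\textbf{Main obstacle.} The only delicate point is the bookkeeping at the endpoints of dyadic intervals. There $x$ may belong to two closed intervals $\Delta_k$, or $X_k(x)$ may be defined by a convention. This at most doubles the constant, so it does not affect the estimate. Beyond that, I must make sure the indexing $k=0,\dots,n-1$ in $Q_n$ matches the Haar enumeration; since the bound is termwise with $|D_{nk}^\alpha|\le1$, any enumeration works. Notably, no cancellation between the coefficients $D_{nk}^\alpha$ is needed: the gain $2^{-3m/2}$ from double integration beats the growth $2^{m/2}$ of $X_k(x)$ geometrically.
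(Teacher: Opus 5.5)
Your proof is correct and follows essentially the same route as the paper. The paper also bounds $H_n(x)$ by the maximum over $i$ and then argues level by level: at each dyadic level at most two Haar functions are nonzero at $x$, and the primitive $\int_0^u X_m$ is bounded by $2^{-s/2}$ and supported on an interval of length $2^{-s}$, so each level contributes $O(2^{-s})$ and the geometric series sums to $O(1)$. Your explicit tent computation for $g_k$ and $G_k$ just makes precise the support localization that the paper's sketch uses only implicitly.
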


\begin{proof}
	It is known that (see \cite{alexits1961, kashin1999}) if \( 2^s < m \leq 2^{s+1} \), then 
	\begin{equation}
		\label{eq5.00002}\left| \int_0^u X_m(v) \, dv \right| \leq 2^{-\frac{s}{2}}.
	\end{equation}
	From  \eqref{eq5.00002}, it follows that for any \( i = 1, 2, \dots, n-1 \) and any \( x \in [0,1] \), we have that
	
	\begin{equation*}
		\left| \int_0^{\frac{i}{n}} \int_0^u \sum_{m=2^s+1}^{2^{s+1}}  D_{nk}^{\alpha} X_m(v) X_m(x)  dv  du \right| \leq 2 \cdot 2^{-s}.
	\end{equation*}
	Thus, we obtain the following estimate:
	\begin{align*}
		T_n(x) 
		&= \frac{1}{n} \sum_{i=1}^{n-1} \left| \int_0^{\frac{i}{n}} Q_n(u, x) \, du \right|  \\
		&= O(1) \cdot \max_{1 \leq i < n} \left| \int_0^{\frac{i}{n}} \sum_{s=0}^{d} \sum_{m=2^s+1}^{2^{s+1}} D_{nk}^{\alpha} X_m(v) X_m(x) \, dv \right| = O(1).
	\end{align*}
	The proof is complete.
\end{proof} 

\begin{remark}
	This research opens up new possibilities for further exploration in functional analysis, particularly in understanding the interplay between Lipschitz continuity and Fourier series convergence. Future work may involve extending these findings to other classes of functions, potentially revealing deeper insights into the structure of function spaces
\end{remark}

\begin{remark}
	The recent book \cite{PTW} has its main focus on the Villenking-Fourier series. It contains a particular a number of open problems. This paper raises more interesting questions in this fascinating area by considering more general orthogonal systems.
\end{remark}

\section{Conclusion}

\qquad In this paper, we have investigated the boundedness properties of sequences of linear functionals acting on the Lipschitz class $Lip_1$, with a specific application to the summability of Fourier series with respect to general orthonormal systems. While classical systems possess structural and localization advantages that guarantee convergence for smooth functions, general ONS often exhibit pathological behaviors, as famously highlighted by Banach's classical divergence theorem. 

Our main contribution establishes a sharp characterization for the boundedness of these functionals and the corresponding Ces\`aro means $\sigma_n^\alpha(x,f)$. Specifically, we proved that the condition $H_n(x) = O(1)$ is both sufficient (Theorems \ref{theorem1} and \ref{theorem3}) and necessary (Theorems \ref{theorem2} and \ref{theorem4}) to ensure the bounded behavior of the respective mathematical objects. The construction of the specific ONS in Theorem \ref{theorem5} further underscores the structural limitations of general orthogonal systems and guarantees that our sharpness results hold almost everywhere. 

The methodology developed in this work opens up several natural directions for future research:
\begin{itemize}
	\item \textbf{Generalizations to other function classes:} A logical next step involves examining whether the sharp boundary established by the condition $H_n(x) = O(1)$ can be extended to broader generalized Lipschitz spaces $Lip_\beta$ ($0 < \beta < 1$) or to the spaces of functions of bounded variation.
	\item \textbf{Applications to alternative summability methods:} The geometric and analytic techniques applied herein to Ces\`aro means $\sigma_n^\alpha$ provide a robust framework that could be adapted to analyze the pointwise boundedness and localization properties of other classical matrix summability methods for general ONS.
\end{itemize}

In summary, the results presented herein enrich the classical theory of general orthogonal systems and provide sharp analytical tools for evaluating the limitations of linear approximation processes on smooth functional classes.


\begin{thebibliography}{9}
		\bibitem{alexits1961}
		G. Alexits,  Convergence Problems of Orthogonal Series, \textit{International Series of Monographs in Pure and Applied Mathematics}, (1961).
		
		\bibitem{banach1940}
		S. Banach,  Sur la divergence des s\'eries orthogonales, \textit{Stud. Math.}, 9 (1940), {139}--{155}. 
		
		\bibitem{gogoladze2012}
		L. Gogoladze and V. Tsagareishvili,  Fourier coefficients of continuous functions, \textit{Mat. Zametki}, 91 (2012), 5, {691}--{703}; English transl.: \textit{Math. Notes}, 91 (2012), 5-6, {645}--{656}.
		
		\bibitem{kashin1999}
		B. S. Kashin and A. A. Saakyan,  Orthogonal Series, 2nd ed., \textit{AFTs, Moscow}, (1999); English transl.: \textit{AMS, Transl. Math. Monogr.}, 75 (1989).
		
		\bibitem{PTW}
		L.-E. Persson, G. Tephnadze and F. Weisz,  Martingale Hardy Spaces and Summability of one-dimensional Vilenkin-Fourier Series, \textit{Birkh{\"a}user/Springer}, (2022).
		
		\bibitem{person2023}
		L.-E. Persson, V. Tsagareishvili and G. Tutberidze,  Properties of sequence of linear functionals on BV with applications, \textit{Nonlinear Studies}, 30 (2023), 4, {1319}--{1328}.
		
		\bibitem{Shalukhina}
		A. Shalukhina,  Self-improving boundedness of the maximal operator on quasi-Banach lattices over spaces of homogeneous type, \textit{J. Math. Anal. Appl.}, 548 (2025), 2, Article ID {129419}.
		
		\bibitem{tsagareishvili2021analysis}
		V. Tsagareishvili,  Some properties of general Fourier coefficients of Lipschitz functions, \textit{Anal. Math.}, 47 (2021), {229}--{241}.
		
		\bibitem{tsagareishvili2022smooth}
		V. Tsagareishvili,  Smooth functions and general Fourier coefficients, \textit{J. Contemp. Math. Anal.}, 57 (2022), 2, {102}--{111}.
		
		\bibitem{tsagareishvili2022problems}
		V. Tsagareishvili and G. Tutberidze,  Some problems of convergence of general Fourier series, \textit{J. Contemp. Math. Anal.}, 57 (2022), 6, {367}--{377}.
		
		\bibitem{tsagareishvili2022lipvar}
		V. Tsagareishvili,  The variation of general Fourier coefficients of Lipschitz class functions, \textit{Acta Math. Hungar.}, 166 (2022), 1, {142}--{161}.
		
		\bibitem{tsagareishvili2022slovaca}
		V. Tsagareishvili,  Lipschitz class functions and their general Fourier coefficients, \textit{Math. Slovaca}, 72 (2022), 3, {709}--{718}.
		
	\end{thebibliography}
\end{document}